\newtheorem{theorem}{Theorem}
\theoremstyle{plain}
\newtheorem{application}{Application}
\newtheorem{corollary}{Corollary}
\newtheorem{lemma}{Lemma}
\newtheorem{remark}{Remark}
\numberwithin{equation}{section}
\begin{document}
\title[Refinements of Mitrinovi\'{c}'s and Cusa's inequality]{Refinements of
Mitrinovi\'{c}-Cusa inequality}
\author{Zhen-Hang Yang}
\address{System Division, Zhejiang Province Electric Power Test and Research
Institute, Hangzhou, Zhejiang, China, 310014}
\email{yzhkm@163.com}
\date{April 10, 2012}
\subjclass[2010]{26D05, 26D15, 26A48, 33F05}
\keywords{Mitrinovi\'{c}'s inequality, Cusa's inequality, trigonometric
functions, sharp bound, relative error}
\thanks{This paper is in final form and no version of it will be submitted
for publication elsewhere.}

\begin{abstract}
The Mitrinovi\'{c}-Cusa inequality states that for $x\in \left( 0,\pi
/2\right) $ 
\begin{equation*}
\left( \cos x\right) ^{1/3}<\frac{\sin x}{x}<\frac{2+\cos x}{3}\text{ }
\end{equation*}%
hold. In this paper, we prove that 
\begin{equation*}
\left( \cos x\right) ^{1/3}<\left( \cos px\right) ^{1/\left( 3p^{2}\right) }<%
\frac{\sin x}{x}<\left( \cos qx\right) ^{1/\left( 3q^{2}\right) }<\frac{%
2+\cos x}{3}
\end{equation*}%
hold for $x\in \left( 0,\pi /2\right) $ if and only if $p\in \lbrack
p_{1},1) $ and $q\in (0,1/\sqrt{5}]$, where $p_{1}=0.45346830977067...$. And
the function $p\mapsto \left( \cos px\right) ^{1/\left( 3p^{2}\right) }$ is
decreasing on $(0,1]$. Our results greatly refine the Mitrinovi\'{c}-Cusa
inequality.
\end{abstract}

\maketitle

\section{Introduction}

In the recent past, the following double inequality%
\begin{equation}
\left( \cos x\right) ^{1/3}<\frac{\sin x}{x}<\frac{2+\cos x}{3}\text{ \ }%
\left( 0<x<\frac{\pi }{2}\right)  \label{M-C}
\end{equation}%
has attracted the attention of many scholars.

The left hand side inequality (\ref{M-C}) was first proved by Mitrinovi\'{c}
in \cite{Mitrinovic.1965} (see also \cite[pages 238-240]%
{Mitrinovic.Springer.1970}), and so we call it as \emph{Mitrinovi\'{c}'s
inequality}. While the right hand side inequality (\ref{M-C}) was found by
the German philosopher and theologian Nicolaus de Cusa (1401-1464) and
proved explicitly by Huygens (1629--1695) when he approximated $\pi $, and
it is now known as \emph{Cusa's inequality} \cite{Sandor.RGMIA.8(3)(2005)}, 
\cite{Zhu.CMA.58(2009)}, \cite{Mortitc.MIA.14(2011)}, \cite%
{Neuman.13(4)(2010)}, \cite{Chen.JIA.2011.136}. Hence (\ref{M-C}) can be
called as Mitrinovi\'{c}-Cusa inequality.

A nice refinement of the Mitrinovi\'{c}-Cusa inequality (\ref{M-C}) appeared
in \cite[3.4.6]{Mitrinovic.Springer.1970}. For convenience, we record it as
follows.

\noindent \textbf{Theorem M}. \emph{For }$x\in \left( 0,\pi /2\right) $\emph{%
, }%
\begin{equation}
\cos px\leq \frac{\sin x}{x}\leq \cos qx  \label{1.1}
\end{equation}%
\emph{with the best possible constants}%
\begin{equation*}
\text{\ }p=\frac{1}{\sqrt{3}}\text{ \ and \ \ }q=\frac{2}{\pi }\arccos \frac{%
2}{\pi }.
\end{equation*}%
\emph{Also, the following inequalities hold:}%
\begin{equation}
\cos x\leq \frac{\cos x}{1-x^{2}/3}\leq \left( \cos x\right) ^{1/3}\leq \cos 
\frac{x}{\sqrt{3}}\leq \frac{\sin x}{x}\leq \cos qx\leq \cos \frac{x}{2}\leq
1.  \label{1.2}
\end{equation}

Recently, Kl\'{e}n et al. \cite[Theorem 2.4]{Klen.JIA.2010} showed that the
function $p\mapsto \left( \cos px\right) ^{1/p}$ is decreasing on $\left(
0,1\right) $ and improved Cusa's inequality (the right hand side inequality
in (\ref{M-C})), which is stated as follows.

\noindent \textbf{Theorem K}. \emph{For }$x\in \left( -\sqrt{27/5},\sqrt{27/5%
}\right) $\emph{\ }%
\begin{equation}
\cos ^{2}\frac{x}{2}\leq \frac{\sin x}{x}\leq \cos ^{3}\frac{x}{3}\leq \frac{%
2+\cos x}{3}.  \label{Klen}
\end{equation}

The following sharp bounds for $\left( \sin x\right) /x$ due to Lv et al. 
\cite{Lv.25(2012)} give another refinement of the Mitrinovi\'{c}'s
inequality.

\noindent \textbf{Theorem L}. \emph{For }$x\in \left( 0,\pi /2\right) $\emph{%
\ inequalities }%
\begin{equation}
\left( \cos \tfrac{x}{2}\right) ^{4/3}<\frac{\sin x}{x}<\left( \cos \tfrac{x%
}{2}\right) ^{\theta }  \label{Lv}
\end{equation}%
\emph{hold, where }$\theta =2\left( \ln \pi -\ln 2\right) /\ln 2=\allowbreak
1.\,\allowbreak 303\,0...$\emph{\ and }$4/3$\emph{\ are the best possible
constants.}

Other results involving Mitrinovi\'{c}'s and Cusa's inequality can be found
in \cite{Iyengar.6(1945)}, \cite{Sandor.RGMIA.8(3)(2005)}, \cite%
{Zhu.CMA.58(2009)}, \cite{Wu.75(3-4)(2009)}, \cite{Qi.JIA.2009}, \cite%
{Neuman.13(4)(2010)}, \cite{Chen.JIA.2011.136}, \cite{Mortitc.MIA.14(2011)}, 
\cite{Neuman.JMI.in print} and related references therein.

This paper is motivated by these studies and it is aimed at giving sharp
bounds $\left( \cos px\right) ^{1/\left( 3p^{2}\right) }$ for $\left( \sin
x\right) /x$ to establish interpolated inequalities of (\ref{M-C}), that is,
for $x\in \left( 0,\pi /2\right) $, determine the best $p,q\in \left(
0,1\right) $ such that 
\begin{equation}
\left( \cos x\right) ^{1/3}<\left( \cos px\right) ^{1/\left( 3p^{2}\right) }<%
\frac{\sin x}{x}<\left( \cos qx\right) ^{1/\left( 3q^{2}\right) }<\frac{%
2+\cos x}{3}  \label{M}
\end{equation}%
hold.

The organization of this paper is as follows. Some useful lemmas are given
in section 2. In section 3, the sharp bounds $\left( \cos px\right)
^{1/\left( 3p^{2}\right) }$ for $\left( \sin x\right) /x$ and its relative
error estimates are established. In the last section, some precise estimates
for certain integrals are presented.

\section{Lemmas}

\begin{lemma}[\protect\cite{Vamanamurthy.183.1994}, \protect\cite%
{Anderson.New York. 1997}]
\label{Lemma A}Let $f,g:\left[ a,b\right] \mapsto \mathbb{R}$ be two
continuous functions which are differentiable on $\left( a,b\right) $.
Further, let $g^{\prime }\neq 0$ on $\left( a,b\right) $. If $f^{\prime
}/g^{\prime }$ is increasing (or decreasing) on $\left( a,b\right) $, then
so are the functions 
\begin{equation*}
x\mapsto \frac{f\left( x\right) -f\left( b\right) }{g\left( x\right)
-g\left( b\right) }\text{ \ \ \ and \ \ \ }x\mapsto \frac{f\left( x\right)
-f\left( a\right) }{g\left( x\right) -g\left( a\right) }.
\end{equation*}
\end{lemma}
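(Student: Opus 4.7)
The plan is to reduce the statement to Cauchy's Mean Value Theorem combined with the elementary mediant inequality (if $A/B \le C/D$ with $B, D$ of the same sign, then $A/B \le (A+C)/(B+D) \le C/D$). Note first that since $g'$ is nonzero and continuous on $(a,b)$, it has constant sign there, so $g$ is strictly monotone; in particular, $g(x) - g(b)$ and $g(x) - g(a)$ never vanish on the relevant half-open intervals, and differences of the form $g(x_2) - g(x_1)$ all carry the same sign. This justifies the applications of the mediant inequality that follow.

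I would focus on $F(x) := \bigl(f(x)-f(b)\bigr)/\bigl(g(x)-g(b)\bigr)$; the companion function is handled by an entirely symmetric argument (or by replacing $f,g$ with $\tilde f(x) = f(a+b-x)$, $\tilde g(x) = g(a+b-x)$). Fix $a \le x_1 < x_2 < b$. Apply Cauchy's MVT on $[x_1,x_2]$ and on $[x_2,b]$ to obtain points $\alpha \in (x_1,x_2)$ and $\beta \in (x_2,b)$ with
\begin{equation*}
\frac{f(x_2)-f(x_1)}{g(x_2)-g(x_1)} = \frac{f'(\alpha)}{g'(\alpha)}, \qquad \frac{f(b)-f(x_2)}{g(b)-g(x_2)} = \frac{f'(\beta)}{g'(\beta)}.
\end{equation*}
Since $\alpha < \beta$, the assumed monotonicity of $f'/g'$ gives an inequality between these two quotients (say, $\le$ in the increasing case).

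The mediant inequality then yields
\begin{equation*}
\frac{f(x_2)-f(x_1)}{g(x_2)-g(x_1)} \;\le\; \frac{\bigl(f(x_2)-f(x_1)\bigr)+\bigl(f(b)-f(x_2)\bigr)}{\bigl(g(x_2)-g(x_1)\bigr)+\bigl(g(b)-g(x_2)\bigr)} = \frac{f(b)-f(x_1)}{g(b)-g(x_1)} \;\le\; \frac{f(b)-f(x_2)}{g(b)-g(x_2)},
\end{equation*}
and reading off the outer inequality (after multiplying numerator and denominator by $-1$ on each side) gives exactly $F(x_1) \le F(x_2)$, i.e., $F$ is increasing. Reversing every inequality handles the case when $f'/g'$ is decreasing. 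The argument for $G(x) = \bigl(f(x)-f(a)\bigr)/\bigl(g(x)-g(a)\bigr)$ proceeds identically, applying Cauchy's MVT now on $[a,x_1]$ and $[x_1,x_2]$ and feeding the result into the same mediant step.

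The only genuinely delicate point is ensuring the mediant inequality is applied with denominators of a common sign; but as noted above, this is automatic from $g' \ne 0$ on $(a,b)$. Everything else is a direct and routine use of Cauchy's MVT, so I do not foresee a substantive obstacle.
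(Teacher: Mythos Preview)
The paper does not actually prove this lemma; it merely quotes it from the cited references (Vamanamurthy--Vuorinen and Anderson--Vamanamurthy--Vuorinen). Your argument is a correct proof and is essentially the standard one found in those sources: Cauchy's Mean Value Theorem followed by the mediant inequality.

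Two very minor remarks. First, you invoke continuity of $g'$ to conclude it has constant sign, but the hypotheses only give differentiability of $g$ on $(a,b)$, not continuity of $g'$; the correct justification is Darboux's theorem (derivatives enjoy the intermediate value property), which yields the same conclusion from $g' \neq 0$ alone. Second, in your displayed mediant chain
\[
\frac{f(x_2)-f(x_1)}{g(x_2)-g(x_1)} \;\le\; \frac{f(b)-f(x_1)}{g(b)-g(x_1)} \;\le\; \frac{f(b)-f(x_2)}{g(b)-g(x_2)},
\]
the desired conclusion $F(x_1) \le F(x_2)$ is the \emph{right-hand} inequality (the middle term equals $F(x_1)$ and the right term equals $F(x_2)$ after the sign flip), not the ``outer'' inequality between the first and last terms, which is just the Cauchy comparison you began with. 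Neither point affects the validity of the argument.
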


\begin{lemma}[\protect\cite{Biernacki.9.1955}]
\label{Lemma B}Let $a_{n}$ and $b_{n}$ $(n=0,1,2,...)$ be real numbers and
let the power series $A\left( t\right) =\sum_{n=1}^{\infty }a_{n}t^{n}$ and $%
B\left( t\right) =\sum_{n=1}^{\infty }b_{n}t^{n}$ be convergent for $|t|<R$.
If $b_{n}>0$ for $n=0,1,2,...$, and $a_{n}/b_{n}$ is strictly increasing (or
decreasing) for $n=0,1,2,...$, then the function $A\left( t\right) /B\left(
t\right) $ is strictly increasing (or decreasing) on $\left( 0,R\right) $.
\end{lemma}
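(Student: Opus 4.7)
The plan is to reduce the monotonicity of $A(t)/B(t)$ to the positivity of the Wronskian-like quantity $W(t)=A'(t)B(t)-A(t)B'(t)$, since $(A/B)'=W/B^{2}$ and $B(t)>0$ on $(0,R)$ (because all $b_{n}>0$). Thus it suffices to show $W(t)>0$ on $(0,R)$ in the increasing case (and $<0$ in the decreasing case).

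First I would expand the two Cauchy products
\begin{equation*}
A'(t)B(t)=\sum_{n,m\geq 1} n\,a_{n}b_{m}\,t^{n+m-1},\qquad
A(t)B'(t)=\sum_{n,m\geq 1} m\,a_{n}b_{m}\,t^{n+m-1},
\end{equation*}
so that $W(t)=\sum_{n,m\geq 1}(n-m)\,a_{n}b_{m}\,t^{n+m-1}$. The key step is the standard symmetrization trick: for each fixed $k\geq 1$ collect the coefficient of $t^{k}$ by grouping each pair $(n,m)$ with its transpose $(m,n)$. The diagonal $n=m$ contributes zero, and each off-diagonal pair contributes
\begin{equation*}
(n-m)\bigl(a_{n}b_{m}-a_{m}b_{n}\bigr)=(n-m)\,b_{n}b_{m}\!\left(\frac{a_{n}}{b_{n}}-\frac{a_{m}}{b_{m}}\right),
\end{equation*}
a product in which both factors have the same sign under the hypothesis that $n\mapsto a_{n}/b_{n}$ is strictly monotone and $b_{n}>0$.

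From this the conclusion is immediate: in the increasing case every such term is strictly positive whenever $n\neq m$, so every coefficient of $W$ is nonnegative and at least one (for instance the $t^{2}$ coefficient, coming from $(n,m)=(2,1)$) is strictly positive. Hence $W(t)>0$ for all $t\in(0,R)$, and $A/B$ is strictly increasing there. The decreasing case is identical with all signs reversed.

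The only mildly delicate point, which I would address explicitly, is convergence: one must justify rearranging the double series defining $W(t)$ as a power series in $t$. This is routine because on any closed subinterval $[0,r]\subset(0,R)$ the series $\sum|a_{n}|r^{n}$ and $\sum n|a_{n}|r^{n}$ converge (the latter by term-by-term differentiability of power series inside the radius of convergence), and similarly for $b_{n}$, so the Cauchy products above converge absolutely and may be rearranged by Fubini/Tonelli. No serious obstacle arises; the substantive content is entirely contained in the symmetrization identity displayed above.
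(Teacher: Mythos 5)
Your argument is correct and complete: reducing to the sign of $W=A'B-AB'$, symmetrizing the Cauchy product so that each off-diagonal pair contributes $(n-m)b_nb_m\bigl(\frac{a_n}{b_n}-\frac{a_m}{b_m}\bigr)t^{n+m-1}$, and noting the strictly positive $t^2$ coefficient is exactly the classical proof of this result. The paper itself only cites the lemma from Biernacki--Krzy\.{z} without reproducing a proof, so there is nothing to contrast with; your treatment of the rearrangement issue inside the radius of convergence is also adequate.
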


\begin{lemma}[{\protect\cite[pp.227-229]{Handbook.math.1979}}]
\label{Lemma C}We have%
\begin{eqnarray}
\cot x &=&\frac{1}{x}-\sum_{n=1}^{\infty }\frac{2^{2n}}{\left( 2n\right) !}%
|B_{2n}|x^{2n-1}\text{, \ }|x|<\pi ,  \label{2.1} \\
\tan x &=&\sum_{n=1}^{\infty }\frac{2^{2n}-1}{\left( 2n\right) !}%
2^{2n}|B_{2n}|x^{2n-1}\text{, \ }|x|<\pi /2,  \label{2.2} \\
\frac{1}{\sin ^{2}x} &=&\frac{1}{x^{2}}+\sum_{n=1}^{\infty }\frac{\left(
2n-1\right) 2^{2n}}{\left( 2n\right) !}|B_{2n}|x^{2n-2}\text{, \ }|x|<\pi ,
\label{2.2a}
\end{eqnarray}%
where $B_{n}$ is the Bernoulli numbers.
\end{lemma}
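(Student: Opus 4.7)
The expansions in Lemma~\ref{Lemma C} are classical, and my plan is to derive all three from the single generating function
\[
\frac{t}{e^{t}-1}=\sum_{n=0}^{\infty }\frac{B_{n}}{n!}t^{n},\qquad |t|<2\pi ,
\]
together with a few standard trigonometric identities.

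For (\ref{2.1}), I would first write cotangent in exponential form via the identity $x\cot x=ix+2ix/(e^{2ix}-1)$, and substitute $t=2ix$ in the generating function. Since $B_{1}=-1/2$, the linear term of the resulting series cancels the isolated $ix$ outside the sum; and since $B_{2n+1}=0$ for $n\geq 1$, what survives is a pure even power series in $x$. Converting $(-1)^{n-1}B_{2n}$ to $|B_{2n}|$ then gives (\ref{2.1}), valid on the image of $|t|<2\pi $ under $t=2ix$, namely $|x|<\pi $.

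For (\ref{2.2}), I would apply the identity $\tan x=\cot x-2\cot 2x$ and substitute (\ref{2.1}) in each term. The two $1/x$ singularities cancel, and the coefficient of $x^{2n-1}$ collects as $(2^{4n}-2^{2n})|B_{2n}|/(2n)!=2^{2n}(2^{2n}-1)|B_{2n}|/(2n)!$; the first pole of $\cot 2x$ forces $|x|<\pi /2$. For (\ref{2.2a}), I would differentiate (\ref{2.1}) term by term on compact subsets of $|x|<\pi $, using $\frac{d}{dx}(-\cot x)=1/\sin^{2}x$; the factor $2n-1$ in the coefficient comes directly from differentiating $x^{2n-1}$, and uniform convergence of (\ref{2.1}) on compact subdisks justifies the termwise operation.

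The only real obstacle is sign bookkeeping: one must verify that $(-1)^{n-1}B_{2n}=|B_{2n}|$ for every $n\geq 1$, which follows from the well-known alternation of signs of the even Bernoulli numbers. Once this is settled, all three formulas are formal consequences of a single generating function together with standard trigonometric identities, with the correct radii of convergence tracked through the substitutions.
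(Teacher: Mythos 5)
Your derivation is correct, and it is worth noting that the paper itself offers no proof of Lemma~\ref{Lemma C}: it is simply cited from a handbook as a known collection of classical expansions. Your route --- everything from the single generating function $t/(e^{t}-1)=\sum_{n\geq 0}B_{n}t^{n}/n!$ on $|t|<2\pi$ --- is the standard one and all the steps check out: the identity $x\cot x=ix+2ix/(e^{2ix}-1)$ with $t=2ix$ gives $x\cot x=1+\sum_{n\geq 1}(-4)^{n}B_{2n}x^{2n}/(2n)!$, and the sign alternation $(-1)^{n-1}B_{2n}=|B_{2n}|$ (e.g.\ from Euler's formula for $\zeta(2n)$) converts this to (\ref{2.1}) on $|x|<\pi$; the identity $\tan x=\cot x-2\cot 2x$ correctly produces the coefficient $2^{2n}(2^{2n}-1)|B_{2n}|/(2n)!$ in (\ref{2.2}) after the two poles cancel, with the radius $\pi/2$ dictated by $\cot 2x$; and termwise differentiation of (\ref{2.1}) via $-(\cot x)'=1/\sin^{2}x$ is legitimate inside the disk of convergence and yields exactly the factor $2n-1$ in (\ref{2.2a}). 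What your approach buys over the paper's bare citation is a self-contained verification that also makes the stated domains of validity transparent, since each radius is tracked through the substitution or the identity used; the only ingredient you take on faith, the sign pattern of the even Bernoulli numbers, is itself classical and easy to source.
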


\begin{lemma}
\label{Lemma D}Let $F_{p}$ be the function defined $\left( 0,\pi /2\right) $
by 
\begin{equation}
F_{p}\left( x\right) =\frac{\ln \frac{\sin x}{x}}{\ln \cos px}.  \label{2.3}
\end{equation}%
Then $F_{p}$ is strictly increasing on $\left( 0,\pi /2\right) $ if $p\in (0,%
\sqrt{5}/5]$ and decreasing on $\left( 0,\pi /2\right) $ if $p\in \left[
1/2,1\right] $. Moreover, we have%
\begin{equation}
\tfrac{\ln 2-\ln \pi }{\ln \left( \cos \frac{1}{2}\pi p\right) }\ln \cos
px<\ln \frac{\sin x}{x}<\tfrac{1}{3p^{2}}\ln \cos px  \label{Main}
\end{equation}%
if $p\in (0,\sqrt{5}/5]$. The inequalities (\ref{Main}) are reversed if $%
p\in \lbrack 1/2,1]$.
\end{lemma}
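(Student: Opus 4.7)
My plan is to apply Lemma~A to reduce the monotonicity of $F_p$ to that of a simpler quotient, and then invoke Lemmas~B and~C to turn the problem into an arithmetic question about a coefficient ratio.

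Set $f(x)=\ln(\sin x/x)$ and $g(x)=\ln\cos(px)$. Both extend continuously to $x=0$ with $f(0)=g(0)=0$, and $g'(x)=-p\tan(px)\neq 0$ on $(0,\pi/2)$. By Lemma~A, $F_p=f/g$ inherits the monotonicity of
\[
\frac{f'(x)}{g'(x)}=\frac{\,1/x-\cot x\,}{p\tan(px)}.
\]
Plugging in the expansions (\ref{2.1}) and (\ref{2.2}) from Lemma~C and writing $a_n=2^{2n}|B_{2n}|/(2n)!$, this becomes a ratio of two odd power series with positive coefficients,
\[
\frac{f'(x)}{g'(x)}=\frac{\sum_{n\ge 1}a_n x^{2n-1}}{\sum_{n\ge 1}(2^{2n}-1)a_n p^{2n}x^{2n-1}}.
\]
After factoring out $x$ and setting $t=x^2$, Lemma~B applies, so the sought monotonicity is inherited from that of the coefficient ratio
\[
c_n=\frac{1}{(2^{2n}-1)p^{2n}}\qquad (n\ge 1).
\]

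This reduces the lemma to an elementary calculation. Observe that $c_{n+1}/c_n=\phi(n)/p^2$, where $\phi(n)=(4^n-1)/(4^{n+1}-1)$, and a direct check (writing $\phi(n)=(y-1)/(4y-1)$ with $y=4^n$) shows that $\phi$ is strictly increasing in $n$ with $\phi(1)=1/5$ and $\phi(n)\to 1/4$. Therefore $c_n$ is non-decreasing for every $n\ge 1$ iff $p^2\le 1/5$, giving the range $p\in(0,\sqrt{5}/5]$; and strictly decreasing for every $n\ge 1$ iff $p^2\ge 1/4$, giving $p\in[1/2,1]$. Combining with Lemmas~A and~B yields the claimed monotonicity of $F_p$.

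To derive (\ref{Main}), I would compute the boundary values of $F_p$. A Taylor expansion at $0$ gives $F_p(0^+)=(-1/6)/(-p^2/2)=1/(3p^2)$, while direct substitution gives $F_p(\pi/2)=(\ln 2-\ln\pi)/\ln\cos(p\pi/2)$. In the increasing range, the two-sided bound $1/(3p^2)<F_p(x)<(\ln 2-\ln\pi)/\ln\cos(p\pi/2)$ translates into (\ref{Main}) upon multiplication by $\ln\cos(px)<0$, which reverses the inequalities; the decreasing range simply swaps the roles of the two bounds. The main subtle point I anticipate is the threshold $p=\sqrt{5}/5$, where $c_1=c_2$ but $c_n$ is still strictly increasing for $n\ge 2$, so that strict monotonicity of $F_p$ at this endpoint requires either a mild strengthening of Lemma~B or a short limiting argument in $p$.
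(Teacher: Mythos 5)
Your proof follows the paper's argument essentially verbatim: the same reduction via Lemma \ref{Lemma A} to the quotient $f'/g'$, the same power-series expansion via Lemma \ref{Lemma C}, and the same coefficient-ratio monotonicity check via Lemma \ref{Lemma B}, with the thresholds $p^2\le 1/5$ and $p^2\ge 1/4$ arising from exactly the same computation (your $c_n$ is the paper's $a_n/b_n$ up to the harmless constant factor $p^2$), and the same endpoint-value argument for (\ref{Main}). The subtlety you flag at $p=\sqrt{5}/5$, where $c_1=c_2$ so Lemma \ref{Lemma B} as literally stated (strict monotonicity of the coefficient ratio) does not directly apply, is real but is equally present and unaddressed in the paper's own proof; the standard refinement of the Biernacki--Krzy\.z lemma (nonconstant, nondecreasing coefficient ratio still yields strict monotonicity of $A/B$) closes it.
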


\begin{proof}
For $x\in \left( 0,\pi /2\right) $, we define $f\left( x\right) =\ln \frac{%
\sin x}{x}$ and $g\left( x\right) =\ln \cos px$, where $p\in (0,1]$. Note
that $f\left( 0^{+}\right) =g\left( 0^{+}\right) =0$, then $F_{p}\left(
x\right) $ can be written as 
\begin{equation*}
F_{p}\left( x\right) =\frac{f\left( x\right) -f\left( 0^{+}\right) }{g\left(
x\right) -g\left( 0^{+}\right) }.
\end{equation*}%
Differentiation and using (\ref{2.1}) and (\ref{2.2}) yield 
\begin{equation*}
\frac{f^{\prime }\left( x\right) }{g^{\prime }\left( x\right) }=\allowbreak 
\frac{p\left( \frac{1}{x}-\cot x\right) }{\tan px}=\allowbreak \frac{%
\sum_{n=1}^{\infty }\frac{2^{2n}}{\left( 2n\right) !}|B_{2n}|x^{2n-1}}{%
\sum_{n=1}^{\infty }\frac{2^{2n}-1}{\left( 2n\right) !}%
p^{2n-2}2^{2n}|B_{2n}|x^{2n-1}}:=\allowbreak \frac{\sum_{n=1}^{\infty
}a_{n}x^{2n-1}}{\sum_{n=1}^{\infty }b_{n}x^{2n-1}},
\end{equation*}%
where 
\begin{equation*}
a_{n}=\frac{2^{2n}}{\left( 2n\right) !}|B_{2n}|\text{, \ }b_{n}=\frac{%
2^{2n}-1}{\left( 2n\right) !}p^{2n-2}2^{2n}|B_{2n}|\text{.}
\end{equation*}%
Clearly, if the monotonicity of $a_{n}/b_{n}$ is proved, then by Lemma \ref%
{Lemma B} it is deduced the monotonicity of $f^{\prime }/g^{\prime }$, and
then the monotonicity of the function $F_{p}$ easily follows from Lemma \ref%
{Lemma A}. Now we prove the monotonicity of $a_{n}/b_{n}$. Indeed,
elementary computation yields%
\begin{eqnarray*}
\frac{b_{n+1}}{a_{n+1}}-\frac{b_{n}}{a_{n}} &=&\left( 2^{2n+2}-1\right)
p^{2n}-\left( 2^{2n}-1\right) p^{2n-2} \\
&=&\left( 4^{n+1}-1\right) p^{2n-2}\left( p^{2}-\frac{1}{4}+\frac{3}{4\left(
4^{n+1}-1\right) }\right) ,
\end{eqnarray*}%
from which it is easy to obtain that for $n\in \mathbb{N}$ 
\begin{equation*}
\frac{b_{n+1}}{a_{n+1}}-\frac{b_{n}}{a_{n}}\left\{ 
\begin{array}{cc}
\leq 0 & \text{if }p^{2}<\frac{1}{5}, \\ 
> & \text{if }p^{2}\geq \frac{1}{4}.%
\end{array}%
\right.
\end{equation*}%
It is seen that $b_{n}/a_{n}$ is decreasing if $0<p\leq \sqrt{5}/5$ and
increasing if $1/2\leq p\leq 1$, which together with $a_{n}$, $b_{n}>0$ for $%
n\in \mathbb{N}$ leads to $a_{n}/b_{n}$ is strictly increasing if $0<p\leq 
\sqrt{5}/5$ and decreasing if $1/2\leq p\leq 1$.

By the monotonicity of the function $F_{p}$ and notice that 
\begin{equation*}
F_{p}\left( 0^{+}\right) =\tfrac{1}{3p^{2}}\text{ \ \ \ and \ \ \ }%
F_{p}\left( \tfrac{\pi }{2}^{-}\right) =\tfrac{\ln 2-\ln \pi }{\ln \left(
\cos \frac{1}{2}\pi p\right) },
\end{equation*}%
the inequalities (\ref{Main}) follow immediately.
\end{proof}

\begin{remark}
Lemma \ref{Lemma D} contains many useful and interesting inequalities for
trigonometric functions. For example, put $p=1/\sqrt{3}$, $\frac{2}{\pi }%
\arccos \frac{2}{\pi }\in \left[ 1/2,1\right] $ in (\ref{Main}) yield the
second and first inequality of (\ref{1.1}), respectively; put $p=1/2\in %
\left[ 1/2,1\right] $ leads to (\ref{Lv}). Similarly, by virtue of Lemma \ref%
{Lemma D} we will easily prove our most main results in the sequel.
\end{remark}

\begin{lemma}
\label{Lemma U(p)}For $x\in \left( 0,\pi /2\right) $, let the function $U:$ $%
(0,1]\mapsto \left( -\infty ,0\right) $ be defined by 
\begin{equation}
U\left( p\right) =\frac{1}{3p^{2}}\ln \cos px.  \label{U(p)}
\end{equation}%
Then $U$ is decreasing on $(0,1]$ with the limit $U\left( 0^{+}\right)
=-x^{2}/6$.$\allowbreak $
\end{lemma}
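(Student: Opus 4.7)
The plan is to use the power-series tools already set up in Lemma~\ref{Lemma C} rather than attacking $U$ by direct calculus. First I would integrate the tangent series \eqref{2.2} term-by-term on $[0,px]$, which is legitimate since $|px|<\pi/2$ and the series converges uniformly on compacta; this produces
\begin{equation*}
\ln\cos(px)\;=\;-\sum_{n=1}^{\infty} c_n\,(px)^{2n},\qquad c_n\;=\;\frac{2^{2n}(2^{2n}-1)}{(2n)\,(2n)!}\,|B_{2n}|\;>\;0.
\end{equation*}
Dividing by $3p^{2}$ then gives the representation $U(p)=-\tfrac{1}{3}\sum_{n\ge 1} c_n\,x^{2n}\,p^{2n-2}$.

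From this representation, the monotonicity is essentially free. The $n=1$ summand $c_1 x^{2}$ is constant in $p$, while for every $n\ge 2$ the factor $p^{2n-2}$ is a strictly increasing, positive function of $p\in(0,1]$ multiplied by the strictly positive coefficient $c_n x^{2n}$. Hence $\sum_{n\ge 1}c_n x^{2n}p^{2n-2}$ is strictly increasing in $p$, and the outer minus sign flips this to show that $U$ is strictly decreasing on $(0,1]$. For the limit, as $p\to 0^{+}$ every $n\ge 2$ term vanishes and only the $n=1$ contribution survives, so
\begin{equation*}
U(0^{+})\;=\;-\tfrac{1}{3}\,c_1\,x^{2}\;=\;-\tfrac{1}{3}\cdot\tfrac{4\cdot 3}{2\cdot 2!}\cdot|B_{2}|\cdot x^{2}\;=\;-\tfrac{x^{2}}{6},
\end{equation*}
using $|B_{2}|=1/6$. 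This also confirms that $U$ takes values in $(-\infty,0)$, since every term in the series is nonpositive.

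I do not anticipate a real obstacle: the only delicate point is justifying the termwise integration, which is routine given uniform convergence of \eqref{2.2} on $[0,px]\subset(-\pi/2,\pi/2)$. As a sanity check, a purely calculus-based route is also available: differentiating $U$ yields $U'(p)=-\tfrac{1}{3p^{3}}\bigl(px\tan(px)+2\ln\cos(px)\bigr)$, and one reduces monotonicity to showing $h(t)=t\tan t+2\ln\cos t>0$ on $(0,\pi/2)$, which follows from $h(0)=0$ together with $h'(t)=(2t-\sin 2t)/(2\cos^{2}t)>0$. However, the series proof is shorter and is stylistically consistent with the approach already adopted in Lemma~\ref{Lemma D}.
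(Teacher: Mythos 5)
Your proof is correct, but your primary route differs from the paper's. The paper proceeds by direct calculus: it computes $3p^{3}U'(p)=-2\ln\cos(px)-px\tan(px)=:V(p)$, shows $V'(p)=\frac{x}{2\cos^{2}px}(\sin 2px-2px)<0$, and concludes $V(p)<V(0)=0$, hence $U'<0$ (the paper's text contains a sign typo, asserting ``$U'(p)>0$'' before concluding $U$ is decreasing, but the computation gives $U'<0$ as needed). Your ``sanity check'' at the end is exactly this argument, since your $h(px)=-V(p)$. Your main argument instead integrates the tangent series \eqref{2.2} to get $U(p)=-\tfrac13\sum_{n\ge1}c_n x^{2n}p^{2n-2}$ with $c_n>0$, and reads off both the strict monotonicity (each $n\ge2$ term is strictly increasing in $p$) and the limit $U(0^{+})=-x^{2}/6$ from the $n=1$ term; the only points needing a word are the termwise integration and the interchange of $\lim_{p\to0^{+}}$ with the sum, both of which follow from the Weierstrass $M$-test with majorant $\sum c_n x^{2n}=-\ln\cos x<\infty$ for $p\in(0,1]$. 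The series route is stylistically consistent with Lemma~\ref{Lemma D} and delivers the limit $U(0^{+})$ for free (the paper merely asserts it after a ``simple computation''), while the paper's calculus route avoids any convergence discussion; both are complete and correct.
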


\begin{proof}
Differentiation yields%
\begin{eqnarray*}
3p^{3}U^{\prime }\left( p\right) &=&-2\ln \left( \cos px\right) -\frac{%
px\sin px}{\cos px}:=V\left( p\right) , \\
V^{\prime }\left( p\right) &=&\allowbreak \frac{x}{2\cos ^{2}px}\left( \sin
2px-2px\right) <0.
\end{eqnarray*}%
It follows that $V\left( p\right) <V\left( 0\right) =0$, and therefore $%
U^{\prime }\left( p\right) >0$, that is, $U$ is decreasing on $(0,1]$.

Simple computation leads to $U\left( 0^{+}\right) =-x^{2}/6$.

Thus the proof ends.
\end{proof}

\begin{lemma}
\label{Lemma f_(p)}For $p\in \left( 0,1\right] $, let the function $f_{p}$
be defined on $\left( 0,\pi /2\right) $ by 
\begin{equation}
f_{p}\left( x\right) :=\ln \frac{\sin x}{x}-\frac{1}{3p^{2}}\ln \cos px.
\label{f_p}
\end{equation}

(i) If $f_{p}\left( x\right) <0$ holds for all $x\in \left( 0,\pi /2\right) $
then $p\in (0,\sqrt{5}/5]$.

(ii) If $f_{p}\left( x\right) >0$ for all $x\in \left( 0,\pi /2\right) $,
then $p\in \lbrack p_{1},1]$, where $p_{1}=0.45346830977067...$ is the
unique root of equation%
\begin{equation}
f_{p}\left( \tfrac{\pi }{2}\right) \allowbreak =\ln \frac{2}{\pi }-\frac{1}{%
3p^{2}}\ln \cos \frac{p\pi }{2}=0  \label{f_p=0}
\end{equation}%
on $\left( 0,1\right] $.
\end{lemma}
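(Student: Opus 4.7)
The plan is to derive both necessary conditions from the boundary behavior of $f_p$ on $(0,\pi/2)$: part (i) from the local expansion at $x=0$, and part (ii) from the limit at $x=\pi/2$ combined with Lemma \ref{Lemma U(p)}.

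For (i), I would Taylor expand $f_p(x)$ about $x=0$. Using the standard series $\ln(\sin x/x) = -x^2/6 - x^4/180 + O(x^6)$ together with $(1/(3p^2))\ln\cos(px) = -x^2/6 - p^2 x^4/36 + O(x^6)$ (both routine from $\ln(1+u) = u - u^2/2 + \cdots$), the leading quadratic terms cancel and I expect
\[
f_p(x) = \frac{5p^2 - 1}{180}\, x^4 + O(x^6) \qquad (x \to 0^+).
\]
If $p > \sqrt{5}/5$, then $5p^2 - 1 > 0$, so $f_p(x) > 0$ for all sufficiently small $x > 0$, contradicting the hypothesis $f_p < 0$ on $(0,\pi/2)$. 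Hence $p \leq \sqrt{5}/5$, as asserted.

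For (ii), I would set $h(p) := f_p(\pi/2) = \ln(2/\pi) - (1/(3p^2))\ln\cos(p\pi/2)$. For $p \in (0,1)$, $f_p$ extends continuously to $x = \pi/2$, and for $p = 1$ one has $f_1(x) \to +\infty$; in either case the hypothesis $f_p > 0$ on $(0,\pi/2)$ forces $h(p) \geq 0$ by continuity. The decisive input is Lemma \ref{Lemma U(p)} applied at $x = \pi/2$, by which $p \mapsto U(p) = (1/(3p^2))\ln\cos(p\pi/2)$ is strictly decreasing on $(0,1]$, so $h = \ln(2/\pi) - U$ is strictly increasing on the same interval. Since $U(0^+) = -\pi^2/24$ yields $h(0^+) = \ln(2/\pi) + \pi^2/24 < 0$ (numerically about $-0.04$), while $h(p) \to +\infty$ as $p \to 1^-$ (because $\ln\cos(p\pi/2) \to -\infty$), the intermediate value theorem gives a unique $p_1 \in (0,1)$ with $h(p_1) = 0$; its stated numerical value follows by bisection. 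Then $h(p) \geq 0$ if and only if $p \in [p_1, 1]$, which is precisely (ii).

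The main bookkeeping lies in keeping the signs straight in the fourth-order Taylor coefficients for (i) and in correctly invoking Lemma \ref{Lemma U(p)} at the particular value $x = \pi/2$ for (ii); I do not foresee any substantial obstacle, since both ingredients are already collected in Section 2.
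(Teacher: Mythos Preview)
Your proposal is correct and follows essentially the same approach as the paper: both use the fourth-order Taylor coefficient at $x=0$ (the paper writes it as $\lim_{x\to 0^+} f_p(x)/x^4 = p^2/36 - 1/180$, which is your $(5p^2-1)/180$) to force $p\le 1/\sqrt{5}$ in (i), and both use the boundary value $f_p(\pi/2)$ together with Lemma~\ref{Lemma U(p)} to force $p\ge p_1$ in (ii). The only cosmetic difference is that the paper records both boundary conditions for each of (i) and (ii) and then intersects, whereas you use only the sharper one in each case; since $1/\sqrt{5}<p_1$, the two arguments are equivalent.
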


\begin{proof}
At first, We assert that there is a unique $p_{1}\in \left( 0,1\right) $ to
satisfy equation (\ref{f_p=0}) such that $f_{p}\left( \tfrac{\pi }{2}\right)
<0$ for $p\in \left( 0,p_{1}\right) $ and $f_{p}\left( \tfrac{\pi }{2}%
\right) >0$ for $p\in (p_{1},1]$.

In fact, Lemma \ref{Lemma U(p)} indicates that $U$ is decreasing on $\left(
0,1\right) $, and so $p\mapsto f_{p}\left( \tfrac{\pi }{2}\right) $ is
increasing on $\left( 0,1\right) $. Since 
\begin{eqnarray*}
f_{1/3}\left( \tfrac{\pi }{2}\right) &=&\allowbreak \ln \tfrac{2}{\pi }-3\ln 
\tfrac{\sqrt{3}}{2}<0, \\
f_{1/2}\left( \tfrac{\pi }{2}\right) &=&\allowbreak \ln \tfrac{2}{\pi }-%
\tfrac{4}{3}\ln \tfrac{\sqrt{2}}{2}>0,
\end{eqnarray*}%
so the equation (\ref{f_p=0}) has a unique solution $p_{1}$ on $\left(
0,1\right) $ and $p_{1}\in \left( 1/3,1/2\right) $ such that $f_{p}\left( 
\tfrac{\pi }{2}\right) <0$ for $p\in \left( 0,p_{1}\right) $ and $%
f_{p}\left( \tfrac{\pi }{2}\right) >0$ for $p\in (p_{1},1]$. Numerical
calculation yields $p_{1}=0.45346830977067...$.

Now, if inequality $f_{p}\left( x\right) <0$ holds for $x\in \left( 0,\pi
/2\right) $, then we have%
\begin{equation*}
\left\{ 
\begin{array}{l}
\lim_{x\rightarrow 0^{+}}\frac{f_{p}\left( x\right) }{x^{4}}%
=\lim_{x\rightarrow 0^{+}}\frac{\ln \frac{\sin x}{x}-\frac{1}{3p^{2}}\ln
\cos px}{x^{4}}=\allowbreak \frac{1}{36}p^{2}-\frac{1}{180}\leq 0, \\ 
f_{p}\left( \tfrac{\pi }{2}^{-}\right) =\allowbreak \ln \tfrac{2}{\pi }-%
\frac{1}{3p^{2}}\ln \left( \cos \frac{1}{2}\pi p\right) \leq 0.%
\end{array}%
\right.
\end{equation*}%
Solving the inequalities for $p$ yields 
\begin{equation*}
p\in (0,\sqrt{5}/5]\cap (0,p_{1}]=(0,\sqrt{5}/5].
\end{equation*}%
In the same way, if inequality $f_{p}\left( x\right) >0$ holds for all $x\in
\left( 0,\pi /2\right) $, then 
\begin{equation*}
p\in \left[ \sqrt{5}/5,1\right] \cap \lbrack p_{1},1]=[p_{1},1],
\end{equation*}%
which completes the proof.
\end{proof}

\section{Main Results}

Now we state and prove the sharp upper bound $\left( \cos px\right)
^{1/\left( 3p^{2}\right) }$\ for $\left( \sin x\right) /x$.

\begin{theorem}
\label{Theorem Mr1}$\allowbreak $For $p\in (0,1]$, the inequality 
\begin{equation}
\frac{\sin x}{x}<\left( \cos px\right) ^{1/\left( 3p^{2}\right) }
\label{Mr1}
\end{equation}%
holds for all $x\in \left( 0,\pi /2\right) $ if and only if $p\in (0,\sqrt{5}%
/5]$. Moreover, we have 
\begin{equation}
\left( \cos \frac{x}{\sqrt{5}}\right) ^{\alpha }<\frac{\sin x}{x}<\left(
\cos \frac{x}{\sqrt{5}}\right) ^{5/3},  \label{Mr1a}
\end{equation}%
where $\alpha =\left( \ln \frac{2}{\pi }\right) /\ln \left( \cos \frac{\sqrt{%
5}\pi }{10}\right) =\allowbreak 1.\,\allowbreak 671\,4...$and $%
5/3=\allowbreak 1.\,\allowbreak 666\,7...$ are the best possible constants.
\end{theorem}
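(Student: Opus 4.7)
The plan is to reduce the theorem to Lemmas \ref{Lemma D} and \ref{Lemma f_(p)}, which between them already encode both necessity and sufficiency. Observe first that, since $\cos px>0$ on $(0,\pi/2)$ for $p\in(0,1]$, inequality (\ref{Mr1}) is equivalent after taking logarithms to $f_p(x)<0$ on $(0,\pi/2)$, with $f_p$ as in (\ref{f_p}). The ``only if'' direction is therefore exactly Lemma \ref{Lemma f_(p)}(i), forcing $p\in(0,\sqrt{5}/5]$.

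For the ``if'' direction, assume $p\in(0,\sqrt{5}/5]$. Lemma \ref{Lemma D} says that $F_p(x)=\ln(\sin x/x)/\ln\cos px$ is strictly increasing on $(0,\pi/2)$ with $F_p(0^+)=1/(3p^2)$, so $F_p(x)>1/(3p^2)$ throughout. Because $\ln\cos px<0$, multiplying through reverses the inequality to
\begin{equation*}
\ln\frac{\sin x}{x}<\frac{1}{3p^{2}}\ln\cos px,
\end{equation*}
and exponentiating yields (\ref{Mr1}).

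For the refined double inequality (\ref{Mr1a}), specialize to $p=1/\sqrt{5}$, the extremal admissible value, for which $1/(3p^{2})=5/3$. Lemma \ref{Lemma D} then gives that $F_{1/\sqrt{5}}$ is strictly increasing on $(0,\pi/2)$ with
\begin{equation*}
F_{1/\sqrt{5}}(0^{+})=\tfrac{5}{3},\qquad F_{1/\sqrt{5}}\bigl(\tfrac{\pi}{2}^{-}\bigr)=\alpha,
\end{equation*}
so $5/3<F_{1/\sqrt{5}}(x)<\alpha$ on the interval. Multiplying by the negative quantity $\ln\cos(x/\sqrt{5})$ and exponentiating gives both inequalities of (\ref{Mr1a}) simultaneously. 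Sharpness of the constants $5/3$ and $\alpha$ follows from these being precisely the one-sided limits of $F_{1/\sqrt{5}}$: any strictly larger $5/3$ would fail near $x=0^{+}$, and any strictly smaller $\alpha$ would fail near $x=\pi/2$.

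The real work has already been hidden inside Lemma \ref{Lemma D}, whose proof relied on the Bernoulli-number series (\ref{2.1})--(\ref{2.2}), the monotone-ratio criterion of Lemma \ref{Lemma B}, and the l'H\^{o}pital-type rule Lemma \ref{Lemma A}. Once that machinery is in place, the present theorem is essentially a bookkeeping exercise: one appeal to Lemma \ref{Lemma f_(p)} to pin down the exact range of admissible $p$, and a second appeal to Lemma \ref{Lemma D} at the endpoint $p=1/\sqrt{5}$ to extract the sharp companion lower bound via the boundary limits of $F_{p}$.
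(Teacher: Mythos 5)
Your proof is correct and follows essentially the same route as the paper: necessity from Lemma \ref{Lemma f_(p)}(i), sufficiency and the sharp pair (\ref{Mr1a}) from the monotonicity of $F_{p}$ and its boundary values in Lemma \ref{Lemma D} (i.e., the inequalities (\ref{Main}) at $p=1/\sqrt{5}$). You have merely written out the sign bookkeeping and the sharpness argument that the paper leaves implicit.
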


\begin{proof}
From Lemma \ref{Lemma f_(p)} the necessity follows. The second inequality of
(\ref{Main}) implies that the condition $p\in (0,\sqrt{5}/5]$ is sufficient.

Put $p=\sqrt{5}/5$ in (\ref{Main}) yields (\ref{Mr1}).

Thus the proof is completed.
\end{proof}

From the corollary, in order to prove the last inequality in (\ref{M}), it
suffices to compare $e^{-x^{2}/6}$ with $\left( 2+\cos x\right) /3$. We have

\begin{theorem}
\label{Theorem Mr2}The inequality 
\begin{equation}
e^{-x^{2}/6}<\frac{2+\cos x}{3}  \label{Mr2}
\end{equation}%
holds for $x\in \left( 0,\infty \right) $. Moreover, for $x\in \left(
0,a\right) $ ($a>0$) we have%
\begin{equation}
\frac{2+\cos x}{\left( 2+\cos a\right) e^{a^{2}/6}}<e^{-x^{2}/6}<\frac{%
2+\cos x}{3}.  \label{Mr2re}
\end{equation}
\end{theorem}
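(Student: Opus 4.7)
The plan is to reduce both claims in the theorem to a single monotonicity statement. Set
\[
h(x) := \ln\frac{2+\cos x}{3} + \frac{x^{2}}{6},
\]
so that $h(0)=0$. With this definition, (\ref{Mr2}) is just the one-sided statement $h(x)>0$ for $x>0$, and the two inequalities in (\ref{Mr2re}) on $(0,a)$ are together equivalent to the chain $h(0) < h(x) < h(a)$, each bound obtained by exponentiating and rearranging. Hence it suffices to prove that $h$ is strictly increasing on $[0,\infty)$.

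Differentiating gives
\[
h'(x) \;=\; \frac{x}{3} - \frac{\sin x}{2+\cos x},
\]
and since $2+\cos x>0$, the sign of $h'(x)$ matches that of $\phi(x):= x(2+\cos x) - 3\sin x$. So the problem reduces to verifying the Cusa-type inequality $\phi(x)>0$ for all $x>0$, i.e., extending Cusa's classical bound from $(0,\pi/2)$ to the entire positive half-line.

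I would prove this in two ranges. On $(0,\pi)$, iterate differentiation: a direct computation shows $\phi(0)=\phi'(0)=\phi''(0)=0$ and $\phi'''(x) = x\sin x > 0$. Successively this forces $\phi'' > 0$, then $\phi' > 0$, then $\phi > 0$ on $(0,\pi)$. On $[\pi,\infty)$, I would use the crude estimate $\phi(x) \geq x\cdot 1 - 3 \geq \pi - 3 > 0$, which follows from $2+\cos x\geq 1$ and $\sin x\leq 1$.

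The main obstacle is simply the triple-derivative cascade on $(0,\pi)$ together with the observation that Cusa's inequality actually persists past the interval $(0,\pi/2)$ where it is usually stated; once this global version of Cusa is in hand, $h$ is strictly increasing on $[0,\infty)$, and both (\ref{Mr2}) and (\ref{Mr2re}) follow immediately by evaluating $h$ at $0$, $x$, and $a$ and exponentiating.
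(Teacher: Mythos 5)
Your proposal is correct and follows essentially the same route as the paper: you introduce the same auxiliary function $h(x)=\ln\frac{2+\cos x}{3}+\frac{x^{2}}{6}$ (the paper's $g$), reduce both (\ref{Mr2}) and (\ref{Mr2re}) to $h(0)<h(x)<h(a)$, and prove $h$ is strictly increasing on $(0,\infty)$. The only difference is in how $h'>0$ is verified: the paper differentiates once more and observes $h''(x)=\tfrac{1}{3}(\cos x-1)^{2}/(\cos x+2)^{2}\geq 0$, a one-line perfect-square identity, whereas you clear the denominator and run a valid but somewhat longer three-derivative cascade on $(0,\pi)$ together with the crude bound $x(2+\cos x)-3\sin x\geq x-3>0$ on $[\pi,\infty)$.
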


\begin{proof}
Considering the function $g$ defined by 
\begin{equation*}
g\left( x\right) =\ln \frac{2+\cos x}{3}+\frac{x^{2}}{6},
\end{equation*}%
and differentiation yields%
\begin{eqnarray}
g^{\prime }\left( x\right) &=&\frac{x}{3}-\frac{\sin x}{\cos x+2},
\label{4.4} \\
g^{\prime \prime }\left( x\right) &=&\frac{1}{3}\frac{\left( \cos x-1\right)
^{2}}{\left( \cos x+2\right) ^{2}}\geq 0,  \notag
\end{eqnarray}%
which implies that for $x\in \left( 0,\infty \right) $, $g^{\prime }\left(
x\right) >g^{\prime }\left( 0^{+}\right) =0$, then, $g^{\prime }\left(
x\right) >0$, that is, $g$ is increasing on $\left( 0,\infty \right) $.
Hence, we have $g\left( x\right) >g\left( 0^{+}\right) =0$ for $x\in \left(
0,\infty \right) $, that is, (\ref{Mr2}) is true.

For $x\in \left( 0,a\right) $ we have 
\begin{equation*}
0=g\left( 0^{+}\right) <g\left( x\right) <g\left( a\right) =\ln \left( \frac{%
2+\cos a}{3}e^{a^{2}/6}\right) ,
\end{equation*}%
which proves (\ref{Mr2re}).
\end{proof}

Next we establish the sharp lower bound for $\left( \sin x\right) /x$.

\begin{theorem}
\label{Theorem Ml}$\allowbreak $Let $p\in (0,1]$. Then the inequality 
\begin{equation}
\frac{\sin x}{x}>\left( \cos px\right) ^{1/\left( 3p^{2}\right) }  \label{Ml}
\end{equation}%
holds for all $x\in \left( 0,\pi /2\right) $ if and only if $p\in \lbrack
p_{1},1]$, where $p_{1}=0.45346830977067...$ is the unique root of equation (%
\ref{f_p=0}) in $p\in \left( 0,1\right] $. Moreover, we have 
\begin{equation}
\left( \cos p_{1}x\right) ^{1/\left( 3p_{1}^{2}\right) }<\frac{\sin x}{x}%
<\beta \left( \cos p_{1}x\right) ^{1/\left( 3p_{1}^{2}\right) },
\label{Mlre}
\end{equation}%
where $1$ and $\beta \approx \allowbreak 1.\,\allowbreak 000\,2$ are the
best possible constants.
\end{theorem}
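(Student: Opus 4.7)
The plan has three parts: necessity of $p\in[p_{1},1]$, sufficiency, and the sharp upper constant $\beta$. Necessity is exactly Lemma \ref{Lemma f_(p)}(ii). For sufficiency, I would appeal to Lemma \ref{Lemma U(p)}: since $U(p)=(1/(3p^{2}))\ln\cos px$ is strictly decreasing in $p$ on $(0,1]$ for each fixed $x$, the function $p\mapsto f_{p}(x)=\ln(\sin x/x)-U(p)$ is strictly increasing in $p$, so $f_{p}(x)\geq f_{p_{1}}(x)$ whenever $p\in[p_{1},1]$. The whole theorem therefore reduces to the single assertion $f_{p_{1}}(x)>0$ on $(0,\pi/2)$, together with the identification of the unique interior maximum that supplies $\beta$.

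\medskip

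Both endpoint values of $f_{p_{1}}$ vanish: $f_{p_{1}}(0^{+})=0$ by Taylor expansion and $f_{p_{1}}((\pi/2)^{-})=0$ by the defining equation (\ref{f_p=0}) of $p_{1}$. To analyze the interior I would expand $f_{p_{1}}^{\prime }(x)$ using (\ref{2.1})--(\ref{2.2}) from Lemma \ref{Lemma C}, arriving at
\begin{equation*}
\frac{f_{p_{1}}^{\prime }(x)}{x^{3}}=\frac{5p_{1}^{2}-1}{45}+\sum_{n\geq 3}\frac{2^{2n}|B_{2n}|}{(2n)!}\left[\frac{(2^{2n}-1)p_{1}^{2n-2}}{3}-1\right]x^{2n-4}.
\end{equation*}
Because $p_{1}>1/\sqrt{5}$ the constant term is strictly positive, and I would argue that every bracket with $n\geq 3$ is strictly negative, i.e., $(4^{n}-1)p_{1}^{2(n-1)}<3$, by verifying the base case $n=3$ numerically and propagating it via the ratio bound $\frac{4^{n+1}-1}{4^{n}-1}\,p_{1}^{2}\leq \frac{255}{63}\,p_{1}^{2}<1$ for $n\geq 3$. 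This gives strict monotonicity (decreasing) of $f_{p_{1}}^{\prime }(x)/x^{3}$ on $(0,\pi/2)$.

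\medskip

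With this monotonicity, $f_{p_{1}}^{\prime }(x)/x^{3}$ starts positive at $0^{+}$ and cannot remain positive throughout---otherwise $f_{p_{1}}$ would be strictly increasing on $(0,\pi/2)$ while vanishing at both endpoints. Hence $f_{p_{1}}^{\prime }$ has a unique zero $x_{0}\in(0,\pi/2)$, so $f_{p_{1}}$ strictly increases on $(0,x_{0})$, strictly decreases on $(x_{0},\pi/2)$, and stays strictly positive in the interior. Sufficiency follows at once, and the best constants in (\ref{Mlre}) are then $1$ (the common endpoint limit) and $\beta=\exp f_{p_{1}}(x_{0})\approx 1.0002$ (the unique interior maximum, located numerically by solving $f_{p_{1}}^{\prime }(x_{0})=0$). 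The main obstacle is the coefficient-sign analysis of $f_{p_{1}}^{\prime }(x)/x^{3}$, which uses the numerical information $1/\sqrt{5}<p_{1}<1/2$ already extracted in Lemma \ref{Lemma f_(p)}; every other step---the monotonic reduction in $p$, the endpoint values, and the single-maximum argument---follows routinely from the lemmas of Section 2.
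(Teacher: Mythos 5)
Your proposal is correct and follows the same overall architecture as the paper's proof: necessity from Lemma \ref{Lemma f_(p)}(ii), reduction of sufficiency to the single inequality $f_{p_{1}}>0$ via the monotonicity in $p$ supplied by Lemma \ref{Lemma U(p)}, the observation that $f_{p_{1}}$ vanishes at both endpoints, monotonicity of $h(x)=f_{p_{1}}^{\prime }(x)/x^{3}$, the contradiction argument forcing $h$ to change sign exactly once, and the resulting unique interior maximum giving $\beta =\exp f_{p_{1}}(x_{0})$. Where you genuinely differ is in the technical core, namely how the monotonicity of $h$ is proved. The paper differentiates once more, expands $x^{4}h^{\prime }(x)$ using all three series of Lemma \ref{Lemma C} (including the one for $1/\sin ^{2}x$), and must show the coefficients $u_{n}=(2^{2n}-1)(2n-10)p_{1}^{2n-2}-3(2n-1)$ are negative, which for $n\geq 6$ requires introducing the auxiliary function $k(x)=\bigl(3/(2^{2x}-1)\bigr)^{1/(2x-2)}$ and proving it increasing. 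You instead read the sign pattern directly off the Maclaurin coefficients of $h$ itself: a positive constant term $\tfrac{1}{45}(5p_{1}^{2}-1)$ and negative coefficients for $n\geq 3$, which yields both $h(0^{+})>0$ and the strict decrease of $h$ in one stroke; your ratio induction on $(4^{n}-1)p_{1}^{2n-2}$ plays the role of the paper's analysis of $k$. This is a real and, to my mind, cleaner simplification of the same strategy.

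One small correction: the bound $p_{1}<1/2$ is not quite enough for your two numerical checks. The base case $63p_{1}^{4}<3$ needs $p_{1}<21^{-1/4}\approx 0.4671$, and the ratio bound $\tfrac{255}{63}p_{1}^{2}<1$ needs $p_{1}<\sqrt{63/255}\approx 0.4971$; from $p_{1}<1/2$ alone one only gets $\tfrac{255}{63}p_{1}^{2}<\tfrac{255}{252}>1$. Both inequalities do hold for the actual value $p_{1}=0.45346\ldots$, so nothing breaks, but you should invoke that sharper numerical bound (the paper itself uses $p_{1}<0.48583$ at the analogous point) rather than the crude estimate $p_{1}<1/2$.
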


\begin{proof}
\textbf{Necessity}. Lemma \ref{Lemma f_(p)} implies necessity.

\textbf{Sufficiency}. Due to Lemma \ref{Lemma U(p)}, it suffices to show
that $f_{p_{1}}\left( x\right) >0$ for all $x\in \left( 0,\pi /2\right) $,
where $f_{p}$ is defined by (\ref{f_p}). To this end, we introduce an
auxiliary function $h$ defined on $\left( 0,\pi /2\right) $ by 
\begin{equation}
h\left( x\right) =\frac{f_{p_{1}}^{\prime }\left( x\right) }{x^{3}}=\frac{%
\left( \cot x-\frac{1}{x}\right) +\frac{1}{3p_{1}}\tan p_{1}x}{x^{3}}%
\allowbreak .  \label{h(x)}
\end{equation}%
We will show that $h$ is decreasing on $\left( 0,\pi /2\right) $.

Differentiation and simplifying yield 
\begin{equation*}
x^{4}h^{\prime }\left( x\right) =\tfrac{4}{3}\tfrac{x}{\sin ^{2}2p_{1}x}-%
\tfrac{1}{3}\tfrac{x}{\sin ^{2}p_{1}x}+\frac{1}{x}-\tfrac{x}{\sin ^{2}x}%
-3\left( \cot x-\tfrac{1}{x}\right) -\tfrac{\tan p_{1}x}{p_{1}},
\end{equation*}%
which, utilizing (\ref{2.1}), (\ref{2.2}) and (\ref{2.2a}), can be expanded
in power series as 
\begin{eqnarray*}
x^{4}h^{\prime }\left( x\right) &=&\tfrac{4}{3}\sum_{n=1}^{\infty }\tfrac{%
\left( 2n-1\right) 2^{2n}}{\left( 2n\right) !}|B_{2n}|\left( 2p_{1}\right)
^{2n-2}x^{2n-1}-\tfrac{1}{3}\sum_{n=1}^{\infty }\tfrac{\left( 2n-1\right)
2^{2n}}{\left( 2n\right) !}|B_{2n}|p_{1}^{2n-2}x^{2n-1} \\
&&-\sum_{n=1}^{\infty }\tfrac{\left( 2n-1\right) 2^{2n}}{\left( 2n\right) !}%
|B_{2n}|x^{2n-1}-3\sum_{n=1}^{\infty }\tfrac{2^{2n}}{\left( 2n\right) !}%
|B_{2n}|x^{2n-1}-\sum_{n=1}^{\infty }\tfrac{2^{2n}-1}{\left( 2n\right) !}%
2^{2n}|B_{2n}|p_{1}^{2n-2}x^{2n-1} \\
&:&=\sum_{n=1}^{\infty }\frac{2^{2n}|B_{2n}|}{3\left( 2n\right) !}%
u_{n}x^{2n-1},
\end{eqnarray*}%
where 
\begin{equation*}
u_{n}=\left( 2^{2n}-1\right) \left( 2n-10\right) p_{1}^{2n-2}-3\left(
2n-1\right) .
\end{equation*}%
Clearly, $u_{n}<0$ for $n=1,2,3,4,5$. We now show that $u_{n}<0$ for $n\geq
6 $. For this purpose, it needs to prove that for $n\geq 6$ 
\begin{equation*}
p_{1}<\left( \frac{3\left( 2n-1\right) }{\left( 2^{2n}-1\right) \left(
2n-10\right) }\right) ^{\frac{1}{2n-2}}:=h_{1}\left( n\right) .
\end{equation*}%
Since $\left( 2n-1\right) >\left( 2n-10\right) $, we have 
\begin{equation*}
h_{1}\left( n\right) >\left( \frac{3}{2^{2n}-1}\right) ^{\frac{1}{2n-2}}:=%
\text{\ }k\left( n\right) .
\end{equation*}

Considering the function $k:\left( 1,\infty \right) \mapsto \left( 0,\infty
\right) $ defined by%
\begin{equation}
k\left( x\right) =\left( \tfrac{3}{2^{2x}-1}\right) ^{1/\left( 2x-2\right) },
\label{k(x)}
\end{equation}%
and differentiation leads to%
\begin{eqnarray*}
\frac{2\left( x-1\right) ^{2}}{k\left( x\right) }k^{\prime }\left( x\right)
&=&\ln \left( 2^{2x}-1\right) -\ln 3-2\ln 2\frac{\left( x-1\right) 2^{2x}}{%
2^{2x}-1}:=k_{1}\left( x\right) , \\
k_{1}^{\prime }\left( x\right) &=&\allowbreak \frac{2^{2x+2}\ln ^{2}2}{%
\left( 2^{2x}-1\right) ^{2}}\left( x-1\right) ,
\end{eqnarray*}%
which reveals that $k_{1}$ is increasing on $\left( 1,\infty \right) $, and
so $k_{1}\left( x\right) >k_{1}\left( 1^{+}\right) =0$, then $k^{\prime
}\left( x\right) >0$, that is, $k$ is increasing on $\left( 1,\infty \right) 
$. Therefore for $n\geq 6$%
\begin{equation*}
\allowbreak \allowbreak 0.485\,83\approx 1365^{-1/10}=k\left( 6\right) \leq
k\left( n\right) <k\left( \infty \right) =\frac{1}{2}.
\end{equation*}%
It follows that for $n\geq 6$%
\begin{equation*}
h_{1}\left( n\right) >k\left( n\right) >0.485\,83>p_{1},
\end{equation*}%
which indicates that $u_{n}<0$ for $n\geq 6$. Thus we have $h^{\prime
}\left( x\right) <0$, that is, the auxiliary function $h$ is decreasing on $%
\left( 0,\pi /2\right) $.

On the other hand, it is clear that 
\begin{equation*}
h\left( 0^{+}\right) =\lim_{x\rightarrow 0+}\frac{\left( \cot x-\frac{1}{x}%
\right) +\frac{1}{3p_{1}}\tan p_{1}x}{x^{3}}=\allowbreak \frac{1}{9}\left(
p_{1}^{2}-\frac{1}{5}\right) >0.
\end{equation*}%
And we claim that $h\left( \tfrac{\pi }{2}^{-}\right) <0$. If $h\left( 
\tfrac{\pi }{2}^{-}\right) \geq 0$, then there must be $h\left( x\right) >0$
for all $x\in \left( 0,\pi /2\right) $, which, by (\ref{h(x)}), implies that 
$f_{p_{1}}^{\prime }\left( x\right) >0$, then $f_{p_{1}}$ is increasing on $%
\left( 0,\pi /2\right) $. It yields 
\begin{equation*}
f_{p_{1}}\left( x\right) >f_{p_{1}}\left( 0^{+}\right) =0\text{\ and\ }%
f_{p_{1}}\left( x\right) <f_{p_{1}}\left( \tfrac{\pi }{2}\right)
=\allowbreak \ln \tfrac{2}{\pi }-\tfrac{1}{3p_{1}^{2}}\ln \left( \cos \tfrac{%
1}{2}p_{1}\pi \right) =0,
\end{equation*}%
which is a contradiction. Consequently, $h\left( 0^{+}\right) >0$ and $%
h\left( \tfrac{\pi }{2}^{-}\right) <0$.

Make use of the monotonicity of the auxiliary function $h$ it is showed that
there is a unique $x_{0}\in \left( 0,\pi /2\right) $ to satisfy $h\left(
x_{0}\right) =0$ such that $h\left( x\right) >0$ for $x\in \left(
0,x_{0}\right) $ and $h\left( x\right) <0$ for $x\in \left( x_{0},\pi
/2\right) $. Then, by (\ref{h(x)}), it is seen that $f_{p_{1}}$ is
increasing on $\left( 0,x_{0}\right) $ and decreasing on $\left( x_{0},\pi
/2\right) $. It is concluded that 
\begin{eqnarray*}
0 &=&f_{p_{1}}\left( 0^{+}\right) <f_{p_{1}}\left( x\right) <f_{p_{1}}\left(
x_{0}\right) \text{ for }x\in \left( 0,x_{0}\right) , \\
0 &=&f_{p_{1}}\left( \tfrac{\pi }{2}^{-}\right) <f_{p_{1}}\left( x\right)
<f_{p_{1}}\left( x_{0}\right) \text{ for }x\in \left( x_{0},\pi /2\right) ,
\end{eqnarray*}%
that is, $0<f_{p_{1}}\left( x\right) <f_{p_{1}}\left( x_{0}\right) $ for $%
x\in \left( 0,\pi /2\right) $.

Solving the equation $h\left( x\right) =0$ which is equivalent with 
\begin{equation*}
f_{p_{1}}^{\prime }\left( x\right) =\left( \cot x-\frac{1}{x}\right) +\frac{1%
}{3p_{1}}\tan p_{1}x=0\allowbreak
\end{equation*}%
by using mathematical computer software, we find that 
\begin{equation*}
x_{0}\in \left( 1.31187873615727632,1.31187873615727633\right) ,
\end{equation*}%
and $\beta =\exp \left( f_{p_{1}}\left( x_{0}\right) \right) \approx
\allowbreak 1.\,\allowbreak 000\,2$, which proves the sufficiency and (\ref%
{Mlre}).
\end{proof}

Letting $p=\frac{1}{2},\tfrac{1}{\sqrt{3}},\tfrac{1}{\sqrt{2}},\tfrac{\sqrt{6%
}}{3},1$ in Theorem \ref{Theorem Ml} and $p=\tfrac{1}{\sqrt{6}}$, $\frac{1}{3%
}$, $\tfrac{1}{2\sqrt{3}}$, $\frac{1}{4}$, ..., $\rightarrow 0$ in Theorem %
\ref{Theorem Mr1} together with Theorem \ref{Theorem Mr2} we have

\begin{corollary}
\label{Corollary M}For $x\in \left( 0,\pi /2\right) $, we have 
\begin{eqnarray*}
\left( \cos x\right) ^{1/3} &<&\cdot \cdot \cdot <\left( \cos \tfrac{\sqrt{6}%
x}{3}\right) ^{1/2}<\left( \cos \tfrac{x}{\sqrt{2}}\right) ^{2/3}<\cos 
\tfrac{x}{\sqrt{3}}<\left( \cos \tfrac{x}{2}\right) ^{4/3} \\
&<&\left( \cos p_{1}x\right) ^{1/\left( 3p_{1}^{2}\right) }<\frac{\sin x}{x}%
<\left( \cos \tfrac{x}{\sqrt{5}}\right) ^{5/3}<\left( \cos \tfrac{x}{\sqrt{6}%
}\right) ^{2}<\left( \cos \tfrac{x}{3}\right) ^{3} \\
&<&\left( \cos \tfrac{x}{2\sqrt{3}}\right) ^{4}<\left( \cos \tfrac{x}{4}%
\right) ^{16/3}<\cdot \cdot \cdot <e^{-x^{2}/6}<\frac{2+\cos x}{3}.
\end{eqnarray*}%
where $p_{1}=0.45346830977067...$.
\end{corollary}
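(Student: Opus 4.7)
The plan is to deduce the entire chain mechanically from the earlier results: Lemma \ref{Lemma U(p)} supplies the ordering between the various expressions $\left(\cos px\right)^{1/(3p^{2})}$ for different values of $p$, while Theorems \ref{Theorem Mr1}, \ref{Theorem Ml}, and \ref{Theorem Mr2} anchor this one-parameter family around $(\sin x)/x$ and the Cusa bound $(2+\cos x)/3$.

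First I would invoke Lemma \ref{Lemma U(p)}: the function $U(p)=\frac{1}{3p^{2}}\ln\cos(px)$ is strictly decreasing on $(0,1]$ for every fixed $x\in(0,\pi/2)$, with $U(0^{+})=-x^{2}/6$. Exponentiating, this says that $p\mapsto(\cos px)^{1/(3p^{2})}$ strictly increases as $p$ decreases through $(0,1]$, with limit $e^{-x^{2}/6}$ as $p\to 0^{+}$. Consequently, for any finite list $1\geq q_{1}>q_{2}>\cdots>q_{k}>0$ one automatically obtains
\begin{equation*}
(\cos q_{1}x)^{1/(3q_{1}^{2})} < (\cos q_{2}x)^{1/(3q_{2}^{2})} < \cdots < (\cos q_{k}x)^{1/(3q_{k}^{2})} < e^{-x^{2}/6}.
\end{equation*}

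Next I would split the statement at $(\sin x)/x$. For the lower half, the successive exponents $p=1,\sqrt{6}/3,1/\sqrt{2},1/\sqrt{3},1/2,p_{1}$ are strictly decreasing and all lie in $[p_{1},1]$ (since $p_{1}\approx 0.4535<1/2$), so Theorem \ref{Theorem Ml} gives $(\cos px)^{1/(3p^{2})}<(\sin x)/x$ for each, and the master chain above arranges them in the stated order. For the upper half, the exponents $p=1/\sqrt{5},1/\sqrt{6},1/3,1/(2\sqrt{3}),1/4,\ldots$ are strictly decreasing and, because $\sqrt{5}/5=1/\sqrt{5}$, all lie in $(0,\sqrt{5}/5]$; hence Theorem \ref{Theorem Mr1} gives $(\sin x)/x<(\cos px)^{1/(3p^{2})}$ for each, and the master chain together with the limit $p\to 0^{+}$ yields the right half, terminating in $e^{-x^{2}/6}$. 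The final inequality $e^{-x^{2}/6}<(2+\cos x)/3$ is precisely Theorem \ref{Theorem Mr2}.

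There is no serious obstacle here: the delicate analytic work (the sharpness at $p=p_{1}$ in Theorem \ref{Theorem Ml} and the monotonicity of $U$ in Lemma \ref{Lemma U(p)}) has already been carried out. The entire content of the corollary is the routine bookkeeping check that every exponent listed on the left side of $(\sin x)/x$ lies in $[p_{1},1]$, every exponent on the right side lies in $(0,\sqrt{5}/5]$, and that within each half the exponents are arranged in the decreasing order required by Lemma \ref{Lemma U(p)}.
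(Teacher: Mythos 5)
Your proof is correct and follows essentially the paper's own route: the paper likewise obtains the corollary by substituting the listed values of $p$ into Theorems \ref{Theorem Ml} and \ref{Theorem Mr1} and appealing to Theorem \ref{Theorem Mr2}. You are in fact slightly more complete than the paper, since you explicitly invoke the monotonicity of $U$ from Lemma \ref{Lemma U(p)} to order the bounds among themselves (and to pass to the limit $e^{-x^{2}/6}$), a step the paper leaves implicit.
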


Thus it can be seen that our results greatly refine Mitrinovi\'{c}-Cusa
inequality (\ref{M-C}).

The following give a relative error estimating $\left( \sin x\right) /x$ by $%
\left( \cos px\right) ^{1/\left( 3p^{2}\right) }$.

\begin{theorem}
\label{Theorem Mrre}$\allowbreak $For $p\in (0,1]$, let $f_{p}$ be defined
on $\left( 0,\pi /2\right) $ by (\ref{f_p}). Then $f_{p}$ is decreasing if $%
p\in $ $(0,\sqrt{5}/5]$ and increasing if $p\in \left[ 1/2,1\right] $.

Moreover, if $p\in $ $(0,\sqrt{5}/5]$ then for $x\in \left( 0,c\right) $
with $c\in \left( 0,\pi /2\right) $ 
\begin{equation}
\gamma _{p}\left( c\right) \left( \cos px\right) ^{1/\left( 3p^{2}\right) }<%
\frac{\sin x}{x}<\left( \cos px\right) ^{1/\left( 3p^{2}\right) }
\label{Mrre}
\end{equation}%
with the best possible constants $\gamma _{p}\left( c\right) =c^{-1}\left(
\sin c\right) \left( \cos pc\right) ^{-1/\left( 3p^{2}\right) }$ and $1$.
The inequalities (\ref{Mrre}) are reversed if $p\in \left[ 1/2,1\right] $.
\end{theorem}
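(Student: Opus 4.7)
The plan is to prove the monotonicity of $f_p$ by expanding its derivative in a power series using Lemma \ref{Lemma C}, and then to read off (\ref{Mrre}) and the sharpness of the constants from the monotonicity together with the boundary value $f_p(0^+)=0$. Differentiating gives
\[
f_p'(x) = \left(\cot x - \frac{1}{x}\right) + \frac{1}{3p}\tan(px),
\]
and substituting the series (\ref{2.1}) for $\cot x$ and (\ref{2.2}) for $\tan(px)$ and collecting like powers produces
\[
f_p'(x) = \sum_{n=1}^{\infty}\frac{2^{2n}|B_{2n}|}{(2n)!}\, c_n(p)\, x^{2n-1}, \qquad c_n(p):=\frac{(2^{2n}-1)\,p^{2n-2}}{3}-1.
\]
A direct check gives $c_1(p)=0$ for every $p$, so the leading non-trivial term is of order $x^3$, and the sign of $f_p'(x)$ on $(0,\pi/2)$ is controlled entirely by the signs of the coefficients $c_n(p)$ for $n\ge 2$.

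Next, I would establish the key sign pattern: $c_n(p)\le 0$ for every $n\ge 2$ when $p\in (0,\sqrt{5}/5]$, and $c_n(p)\ge 0$ for every $n\ge 2$ when $p\in [1/2,1]$. The condition $c_n(p)\le 0$ is equivalent to $(4^n-1)p^{2n-2}\le 3$. At the boundary $p=1/\sqrt 5$ this becomes $(4^n-1)/5^{n-1}\le 3$; the left side equals exactly $3$ at $n=2$, and the elementary estimate $(4^{n+1}-1)/\bigl(5(4^n-1)\bigr)<1$ for $n\ge 2$ shows the sequence is strictly decreasing from there. At the boundary $p=1/2$ the condition $c_n(p)\ge 0$ reduces to $(4^n-1)/4^{n-1}=4-4^{1-n}\ge 3$, which again equals $3$ at $n=2$ and is strictly increasing in $n$. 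Since $(4^n-1)p^{2n-2}$ is monotone in $p$ for each fixed $n\ge 2$, these two borderline computations propagate to all $p$ in the two target ranges, and I conclude that $f_p'(x)\le 0$ (resp.\ $\ge 0$) on $(0,\pi/2)$ when $p\in (0,\sqrt 5/5]$ (resp.\ $[1/2,1]$), with strict inequality for $x>0$ because at least one $c_n(p)$ with $n\ge 2$ is strictly signed (the coefficient $c_2(p)=5p^2-1$ already does the job except at the endpoints, and there $c_3(p)\ne 0$). Hence $f_p$ is strictly monotone in the corresponding direction.

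Finally, a Taylor expansion using $\ln(\sin x/x)=-x^2/6+O(x^4)$ and $(1/(3p^2))\ln\cos px=-x^2/6+O(x^4)$ yields $f_p(0^+)=0$, while $\exp f_p(x)=(\sin x/x)(\cos px)^{-1/(3p^2)}$. For $p\in (0,\sqrt 5/5]$ the strictly decreasing $f_p$ satisfies $f_p(c)<f_p(x)<f_p(0^+)=0$ on $(0,c)$; exponentiating and multiplying by $(\cos px)^{1/(3p^2)}$ gives (\ref{Mrre}) with the constant $\exp f_p(c)=c^{-1}(\sin c)(\cos pc)^{-1/(3p^2)}=\gamma_p(c)$, and best-possibility follows because the ratio $\exp f_p(x)$ attains $1$ and $\gamma_p(c)$ in the limits $x\to 0^+$ and $x\to c^-$ respectively. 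The case $p\in [1/2,1]$ is completely analogous with both inequalities reversed. The main obstacle I anticipate is the uniform sign analysis of $c_n(p)$: one must identify $n=2$ as the critical index and verify the monotonicity of the sequences $(4^n-1)/5^{n-1}$ and $(4^n-1)/4^{n-1}$ in $n$, after which the rest of the argument is elementary.
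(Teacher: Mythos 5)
Your proposal follows essentially the same route as the paper: differentiate $f_p$, expand $f_p'$ in a power series via Lemma \ref{Lemma C}, note that the $n=1$ coefficient vanishes, and show that all higher coefficients carry one sign for $p\in(0,\sqrt{5}/5]$ and the opposite sign for $p\in[1/2,1]$ (the paper packages this as the monotonicity of $k(n)=\bigl(3/(2^{2n}-1)\bigr)^{1/(2n-2)}$ between $k(2)=1/\sqrt5$ and $k(\infty)=1/2$, while you verify the equivalent fact by direct ratio comparisons of $(4^n-1)p^{2n-2}$ with $3$ at the boundary values of $p$), after which the boundary values $f_p(0^+)=0$ and $f_p(c)=\ln\gamma_p(c)$ give (\ref{Mrre}) and the sharpness exactly as in the paper. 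The only blemish is your claim that $(4^n-1)/4^{n-1}$ equals $3$ at $n=2$ (it equals $15/4$ there; it equals $3$ at $n=1$), which is harmless since only the inequality $\geq 3$ for $n\geq 2$ is needed.
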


\begin{proof}
Differentiation and using (\ref{2.1}) and (\ref{2.2}) yield 
\begin{eqnarray*}
f_{p}^{\prime }\left( x\right) &=&\allowbreak \left( \cot x-\frac{1}{x}%
\right) +\frac{1}{3p}\tan px \\
&=&-\sum_{n=1}^{\infty }\frac{2^{2n}}{\left( 2n\right) !}|B_{2n}|x^{2n-1}+%
\frac{1}{3}\sum_{n=1}^{\infty }\frac{2^{2n}-1}{\left( 2n\right) !}%
p^{2n-2}2^{2n}|B_{2n}|x^{2n-1} \\
&=&\sum_{n=1}^{\infty }\frac{\left( 2^{2n}-1\right) 2^{2n}}{3\left(
2n\right) !}|B_{2n}|\left( p^{2n-2}-\tfrac{3}{2^{2n}-1}\right)
x^{2n-1}:=\sum_{n=2}^{\infty }s_{n}t_{n}x^{2n-1},
\end{eqnarray*}%
where 
\begin{eqnarray*}
s_{n} &=&\frac{\left( 2^{2n}-1\right) 2^{2n}|B_{2n}|}{3\left( 2n\right) !}%
\tfrac{p^{2n-2}-\frac{3}{2^{2n}-1}}{p-\left( \frac{3}{2^{2n}-1}\right)
^{1/\left( 2n-2\right) }}>0, \\
t_{n} &=&p-k\left( n\right)
\end{eqnarray*}%
for $n\geq 2$ and $p\in (0,1]$, where the function $k$ is defined by (\ref%
{k(x)}). As showed in the proof of Theorem \ref{Theorem Ml}, $k$ is
increasing on $\left( 1,\infty \right) $, and so for $n\geq 2$ 
\begin{equation*}
1/\sqrt{5}=k\left( 2\right) \leq k\left( n\right) \leq k\left( \infty
\right) =\lim_{n\rightarrow \infty }\left( \tfrac{3}{2^{2n}-1}\right)
^{1/\left( 2n-2\right) }=\allowbreak \frac{1}{2},
\end{equation*}%
and then, $t_{n}=p-k\left( n\right) \leq 0$ if $p\in (0,\sqrt{5}/5]$ and $%
t_{n}=p-k\left( n\right) \geq 0$ if $p\in \left[ 1/2,1\right] $. Thus, if $%
p\in (0,\sqrt{5}/5]$ then $f_{p}^{\prime }\left( x\right) <0$, that is, $%
f_{p}$ is decreasing, and it is derived that for $x\in \left( 0,c\right) $
with $c\in \left( 0,\pi /2\right) $%
\begin{equation*}
\ln \gamma _{p}\left( c\right) =f_{p}\left( a\right) <f_{p}\left( x\right)
<\lim_{x\rightarrow 0^{+}}f_{p}\left( x\right) =0,
\end{equation*}%
which yields (\ref{Mrre}).

Likewise, if $p\in \left[ 1/2,1\right] $ then $f_{p}^{\prime }\left(
x\right) >0$, then, $f_{p}$ is increasing, and (\ref{Mrre}) is reversed,
which completes the proof.
\end{proof}

Letting $c\rightarrow \pi /2$ and putting $p=\sqrt{5}/5$, $\sqrt{6}/6$, $1/3$%
, $0^{+}$\ in Theorem \ref{Theorem Mrre}, we get

\begin{corollary}
\label{Corollary Mrre1}$\allowbreak $The following inequalities 
\begin{eqnarray}
\gamma _{1/\sqrt{5}}\left( \tfrac{\pi }{2}\right) \left( \cos \tfrac{x}{%
\sqrt{5}}\right) ^{5/3} &<&\frac{\sin x}{x}<\left( \cos \tfrac{x}{\sqrt{5}}%
\right) ^{5/3},  \label{Mrre1} \\
\gamma _{1/\sqrt{6}}\left( \tfrac{\pi }{2}\right) \left( \cos \tfrac{x}{%
\sqrt{6}}\right) ^{2} &<&\frac{\sin x}{x}<\left( \cos \tfrac{x}{\sqrt{6}}%
\right) ^{2},  \label{Mrre2} \\
\gamma _{1/3}\left( \tfrac{\pi }{2}\right) \left( \cos \tfrac{x}{3}\right)
^{3} &<&\frac{\sin x}{x}<\left( \cos \tfrac{x}{3}\right) ^{3},  \label{Mrre3}
\\
\gamma _{0^{+}}\left( \tfrac{\pi }{2}\right) e^{-x^{2}/6} &<&\frac{\sin x}{x}%
<e^{-x^{2}/6}  \label{Mrre4}
\end{eqnarray}%
hold true for $x\in \left( 0,\pi /2\right) $, where $\gamma _{1/\sqrt{5}%
}\left( \pi /2\right) =\allowbreak 0.998\,72...$, $\gamma _{1/\sqrt{6}%
}\left( \pi /2\right) =\allowbreak 0.991\,41...$, $\gamma _{1/3}\left( \pi
/2\right) =16\sqrt{3}/\left( 9\pi \right) $, $\gamma _{0^{+}}\left( \pi
/2\right) =2e^{\pi ^{2}/24}/\pi $ are the best possible constants..
\end{corollary}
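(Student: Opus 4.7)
The plan is to specialize Theorem \ref{Theorem Mrre} at the four values $p=1/\sqrt{5},\,1/\sqrt{6},\,1/3$ and $p\to 0^{+}$, pushing $c\to\pi/2^{-}$ in each case. For the first three values one has $p\in(0,\sqrt{5}/5]$, and the exponents evaluate to $1/(3p^{2})=5/3,\,2,\,3$ respectively; inequalities (\ref{Mrre1})--(\ref{Mrre3}) then follow directly from (\ref{Mrre}), with the best lower constants read off from
\[
\gamma_{p}(\tfrac{\pi}{2})=\tfrac{2}{\pi}\bigl(\cos\tfrac{p\pi}{2}\bigr)^{-1/(3p^{2})}.
\]
For $p=1/3$ this simplifies to $\tfrac{2}{\pi}(\sqrt{3}/2)^{-3}=16\sqrt{3}/(9\pi)$, while for $p=1/\sqrt{5},\,1/\sqrt{6}$ it is a numerical evaluation. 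Strictness at the endpoint $c=\pi/2$ and sharpness of $\gamma_{p}(\pi/2)$ follow from the monotonicity of $f_{p}$ established in Theorem \ref{Theorem Mrre}, since $f_{p}(\pi/2^{-})=\ln\gamma_{p}(\pi/2)$ is the infimum of $f_{p}$ on $(0,\pi/2)$.

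For (\ref{Mrre4}), I would invoke Lemma \ref{Lemma U(p)}: $U(p)=\tfrac{1}{3p^{2}}\ln\cos px$ is strictly decreasing on $(0,1]$ with $U(0^{+})=-x^{2}/6$, so $(\cos px)^{1/(3p^{2})}\downarrow e^{-x^{2}/6}$ as $p\to 0^{+}$. The strict upper bound $\sin x/x<e^{-x^{2}/6}$ is then obtained by combining Theorem \ref{Theorem Mr1} at $p=1/\sqrt{5}$ with Lemma \ref{Lemma U(p)} applied at the same value, which gives $\sin x/x<(\cos(x/\sqrt{5}))^{5/3}<e^{-x^{2}/6}$. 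The best lower constant is the limit of $\gamma_{p}(\pi/2)$ as $p\to 0^{+}$, namely
\[
\gamma_{0^{+}}(\tfrac{\pi}{2})=\tfrac{2}{\pi}\exp\Bigl(-U(0^{+})\big|_{x=\pi/2}\Bigr)=\tfrac{2}{\pi}\,e^{\pi^{2}/24}.
\]

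The main obstacle is the degenerate case $p\to 0^{+}$ in (\ref{Mrre4}), since Theorem \ref{Theorem Mrre} is stated only for $p\in(0,\sqrt{5}/5]$ and the constant $\gamma_{p}(\pi/2)$ blows up in $p$ unless treated via $U$. Lemma \ref{Lemma U(p)} rescues both the strictness of the limiting inequality (through the monotone convergence $(\cos px)^{1/(3p^{2})}\downarrow e^{-x^{2}/6}$) and the closed form $2e^{\pi^{2}/24}/\pi$ for the best constant; no uniform-convergence argument is needed, since sharpness is claimed at the \emph{fixed} boundary point $x=\pi/2$ rather than uniformly on $(0,\pi/2)$.
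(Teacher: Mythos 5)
Your proposal matches the paper's own derivation: the paper obtains the corollary exactly by letting $c\to\pi/2$ and putting $p=1/\sqrt{5},\,1/\sqrt{6},\,1/3,\,0^{+}$ in Theorem \ref{Theorem Mrre}, with Lemma \ref{Lemma U(p)} supplying the limit $e^{-x^{2}/6}$ and the constant $2e^{\pi^{2}/24}/\pi$ in the degenerate case, just as you do. One cosmetic slip: since $U$ is \emph{decreasing} in $p$ with $U(0^{+})=-x^{2}/6$, the family $(\cos px)^{1/(3p^{2})}$ \emph{increases} to $e^{-x^{2}/6}$ as $p\downarrow 0$ (it approaches the limit from below), not decreases; the inequality $(\cos (x/\sqrt{5}))^{5/3}<e^{-x^{2}/6}$ you rely on is nevertheless correct and your argument stands.
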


Letting $c\rightarrow \pi /2$ and putting $p=1/2\,$\ in Theorem \ref{Theorem
Mrre}, we obtain

\begin{corollary}
\label{Corollary Mlre1}$\allowbreak $For $x\in \left( 0,\pi /2\right) $, the
double inequality 
\begin{equation}
\left( \cos \tfrac{x}{2}\right) ^{4/3}<\frac{\sin x}{x}<\gamma _{1/2}\left( 
\tfrac{\pi }{2}\right) \left( \cos \tfrac{x}{2}\right) ^{4/3}  \label{Mlre1}
\end{equation}%
holds, where $1$ and $\gamma _{1/2}\left( \pi /2\right) =2^{5/3}/\pi
=\allowbreak 1.\,\allowbreak 010\,6...$ are the best constants.
\end{corollary}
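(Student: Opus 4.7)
My plan is to simply specialize Theorem \ref{Theorem Mrre} at the endpoint $p=1/2$. Since $p=1/2$ lies in the interval $[1/2,1]$, the theorem says that the function $f_{1/2}$ defined in (\ref{f_p}) is strictly increasing on $(0,\pi/2)$, and the inequalities (\ref{Mrre}) are reversed, giving
\begin{equation*}
\left(\cos px\right)^{1/(3p^{2})} < \frac{\sin x}{x} < \gamma_{p}(c)\left(\cos px\right)^{1/(3p^{2})}
\end{equation*}
for $x\in(0,c)$ with $c\in(0,\pi/2)$. With $p=1/2$ we have $1/(3p^{2})=4/3$, so the two-sided bound immediately takes the form $(\cos(x/2))^{4/3}<\sin x/x<\gamma_{1/2}(c)(\cos(x/2))^{4/3}$.

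Next I would let $c\to(\pi/2)^{-}$ to obtain the inequality on the full interval $(0,\pi/2)$, and then compute the limiting constant explicitly. Using the formula $\gamma_{p}(c)=c^{-1}(\sin c)(\cos pc)^{-1/(3p^{2})}$ from Theorem \ref{Theorem Mrre}, with $p=1/2$ and $c=\pi/2$, I get
\begin{equation*}
\gamma_{1/2}\!\left(\tfrac{\pi}{2}\right)=\frac{2}{\pi}\cdot 1\cdot\left(\cos\tfrac{\pi}{4}\right)^{-4/3}=\frac{2}{\pi}\cdot 2^{2/3}=\frac{2^{5/3}}{\pi},
\end{equation*}
which numerically is $1.0106\ldots$ as claimed.

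Finally, for sharpness, I would note that the monotonicity of $f_{1/2}$ on $(0,\pi/2)$ together with the boundary values $f_{1/2}(0^{+})=0$ and $f_{1/2}((\pi/2)^{-})=\ln(2^{5/3}/\pi)=\ln\gamma_{1/2}(\pi/2)$ pins down both constants as the infimum and supremum of $f_{1/2}$, hence as best possible multiplicative constants in the displayed inequality. There is essentially no obstacle here: everything reduces to the bookkeeping of plugging $p=1/2$ into Theorem \ref{Theorem Mrre} and evaluating the resulting closed-form constant, so no new analytic work is needed.
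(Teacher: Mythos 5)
Your proposal is correct and follows exactly the route the paper intends: the corollary is obtained by putting $p=1/2$ in Theorem \ref{Theorem Mrre} (the reversed case, since $1/2\in[1/2,1]$), letting $c\to\pi/2$, and evaluating $\gamma_{1/2}(\pi/2)=(2/\pi)(\cos(\pi/4))^{-4/3}=2^{5/3}/\pi$. The sharpness argument via the monotonicity of $f_{1/2}$ and its boundary values is also the same as the paper's.
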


\begin{remark}
Note that the first inequality of (\ref{Mlre1}) also holds for $x\in \left(
0,\pi \right) $. Indeed, differentiation yields 
\begin{equation*}
\frac{x\sin x}{\cos x+2}f_{1/2}^{\prime }\left( x\right) =\frac{\allowbreak x%
}{3}-\frac{\sin x}{\cos x+2}=g^{\prime }\left( x\right) .
\end{equation*}%
From the proof of Theorem \ref{Theorem Mr2} we see that for $x\in \left(
0,\infty \right) $, $g^{\prime }\left( x\right) >0$, which yields for $x\in
\left( 0,\pi \right) $, $f_{1/2}^{\prime }\left( x\right) >0$, and then $%
f_{1/2}\left( x\right) >f_{1/2}\left( 0^{+}\right) =0$, that is, the first
inequality of (\ref{Mlre1}) holds for $x\in \left( 0,\pi \right) $.
\end{remark}

\section{Applications}

As simple applications of main results, we will present some precise
estimates for certain integrals in this section. The following is a direct
corollary of Theorem \ref{Theorem Mrre}.

\begin{application}
We have 
\begin{equation}
\frac{1}{p}\left( \frac{2}{\pi }\right) ^{3p^{2}}\tan \frac{p\pi }{2}%
<\int_{0}^{\pi /2}\left( \frac{\sin x}{x}\right) ^{3p^{2}}dx<\int_{0}^{\pi
/2}\left( \cos px\right) =\frac{1}{p}\sin \frac{p\pi }{2}  \label{A1}
\end{equation}%
if $p\in (0,\sqrt{5}/5]$. Inequalities (\ref{A1}) is reversed if $p\in \left[
1/2,1\right] $.
\end{application}

By integrating both sides of (\ref{Mr2re}) over $\left[ 0,a\right] $ and
simple computation, we have

\begin{application}
For $a>0$ the following inequalities%
\begin{equation}
\frac{2a+\sin a}{\left( 2+\cos a\right) e^{a^{2}/6}}%
<\int_{0}^{a}e^{-x^{2}/6}dx<\allowbreak \frac{2a+\sin a}{3}  \label{A2}
\end{equation}%
are valid. Particularly, we have 
\begin{eqnarray*}
\frac{\pi +1}{2e^{\pi ^{2}/24}} &<&\int_{0}^{\pi /2}e^{-x^{2}/6}dx<\frac{\pi
+1}{3}, \\
\allowbreak \frac{\left( 4-\sqrt{2}\right) \left( \pi +\sqrt{2}\right) }{%
14e^{\pi ^{2}/96}} &<&\int_{0}^{\pi /4}e^{-x^{2}/6}dx<\frac{\pi +\sqrt{2}}{6}%
.
\end{eqnarray*}
\end{application}

For the estimate for the sine integral defined by 
\begin{equation*}
\func{Si}\left( x\right) =\int_{0}^{x}\frac{\sin t}{t}dt,
\end{equation*}%
there has some results, for example, Qi \cite{Qi.12(4)(1996)} showed that 
\begin{equation*}
\allowbreak 1.\,\allowbreak 333\,3...=\frac{4}{3}<\func{Si}\left( \frac{\pi 
}{2}\right) <\frac{\pi +1}{3}=\allowbreak 1.\,\allowbreak 380\,5...\text{;}
\end{equation*}%
the following two estimations are due to Wu \cite{Wu.19(12)(2006)}, \cite%
{Wu.12(2)(2008)}: 
\begin{eqnarray*}
\allowbreak 1.\,\allowbreak 356\,9... &=&\frac{\pi +5}{6}<\func{Si}\left( 
\frac{\pi }{2}\right) <\frac{\pi +1}{3}=\allowbreak 1.\,\allowbreak 380\,5...%
\text{,} \\
1.\,\allowbreak 368\,8... &=&\frac{92-\pi ^{2}}{60}<\func{Si}\left( \frac{%
\pi }{2}\right) <\frac{8+4\pi }{15}=\allowbreak 1.\,\allowbreak 371\,1...%
\text{.}
\end{eqnarray*}%
Now we give a more better one.$\allowbreak $

\begin{application}
We have 
\begin{equation}
\allowbreak \frac{\sqrt{3}}{4}\pi <\int_{0}^{\pi /2}\frac{\sin x}{x}dx<\frac{%
7}{16}\pi  \label{A3}
\end{equation}
\end{application}

\begin{proof}
By Corollary \ref{Corollary M} we see that the inequalities%
\begin{equation}
\cos \frac{x}{\sqrt{3}}<\frac{\sin x}{x}<\cos ^{2}\frac{x}{\sqrt{6}}
\label{4.1}
\end{equation}%
hold for $x\in \left[ 0,\pi /2\right] $. Integrating both sides over $\left[
0,\pi /2\right] $ and simple calculation yield%
\begin{equation}
\sqrt{3}\sin \frac{\pi }{2\sqrt{3}}<\int_{0}^{\pi /2}\frac{\sin x}{x}dx<%
\frac{\pi }{4}+\tfrac{\sqrt{6}}{4}\sin \frac{\pi }{\sqrt{6}}.  \label{4.2}
\end{equation}%
Using (\ref{4.1}) again gives 
\begin{eqnarray*}
\sin \frac{\pi }{2\sqrt{3}} &>&\frac{\pi }{2\sqrt{3}}\cos \tfrac{\tfrac{\pi 
}{2\sqrt{3}}}{\sqrt{3}}=\frac{\pi }{4}, \\
\sin \frac{\pi }{\sqrt{6}} &<&\frac{\pi }{\sqrt{6}}\cos ^{2}\tfrac{\tfrac{%
\pi }{\sqrt{6}}}{\sqrt{6}}=\frac{3\pi }{4\sqrt{6}},
\end{eqnarray*}%
which implies that the left hand side of (\ref{4.2}) is grater than $\sqrt{3}%
\pi /4$ and the right hand side is less than 
\begin{equation*}
\frac{\pi }{4}+\frac{\sqrt{6}}{4}\frac{3\pi }{4\sqrt{6}}=\allowbreak \frac{7%
}{16}\pi .
\end{equation*}%
Thus (\ref{A2}) follows.
\end{proof}

It is known that 
\begin{equation*}
\int_{0}^{\pi /2}\ln \left( \sin x\right) dx=\allowbreak -\frac{\pi }{2}\ln
2.
\end{equation*}%
We now evaluate the integral $\int_{0}^{c}\ln \left( \sin x\right) dx$ $%
(c\in \left( 0,\pi /2\right) $.

\begin{application}
For $c\in \left( 0,\pi /2\right) $, we have%
\begin{equation}
\allowbreak c\ln \left( \sin c\right) -c+\frac{1}{9}c^{3}<\int_{0}^{c}\ln
\left( \sin x\right) dx<c\ln c-c-\frac{1}{18}c^{3}.  \label{A4}
\end{equation}%
Particularly, we get 
\begin{equation}
\allowbreak -\frac{\pi }{72}\left( 36-\pi ^{2}\right) <\int_{0}^{\pi /2}\ln
\left( \sin x\right) dx<\allowbreak -\frac{\pi }{2}\left( \ln \frac{2}{\pi }+%
\frac{\pi ^{2}}{72}+1\right) ,  \label{A41}
\end{equation}%
\begin{equation}
-\allowbreak \tfrac{\pi }{8}\left( 2+\ln 2-\tfrac{\pi ^{2}}{72}\right)
<\int_{0}^{\pi /4}\ln \left( \sin x\right) dx<-\tfrac{\pi }{4}\left( 2\ln
2+1+\tfrac{\pi ^{2}}{288}-\ln \pi \right) .  \label{A42a}
\end{equation}
\end{application}

\begin{proof}
Letting $p\rightarrow 0^{+}$ in (\ref{Mrre}) gives 
\begin{equation*}
\gamma _{0^{+}}\left( c\right) e^{-x^{2}/6}<\frac{\sin x}{x}<e^{-x^{2}/6},
\end{equation*}%
where $\gamma _{0^{+}}\left( c\right) =c^{-1}\left( \sin c\right)
e^{c^{2}/6} $. Multiplying both sides by $x$ and taking the logarithm and
next integrating $\left[ 0,c\right] $ yield 
\begin{equation*}
\int_{0}^{c}\ln \left( \gamma _{0^{+}}\left( c\right) xe^{-x^{2}/6}\right)
dx<\int_{0}^{c}\ln \left( \sin x\right) dx<\int_{0}^{c}\ln \left(
xe^{-x^{2}/6}\right) dx.
\end{equation*}%
Simple integral computation leads to desired result.
\end{proof}

The Catalan constant \cite{Catalan.31(7)}%
\begin{equation*}
G=\sum_{n=0}^{\infty }\frac{\left( -1\right) ^{n}}{\left( 2n+1\right) ^{2}}%
=0.9159655941772190...
\end{equation*}%
is a famous mysterious constant appearing in many places in mathematics and
physics. Its integral representations \cite{Bradley.2001} contain the
following%
\begin{eqnarray*}
G &=&\int_{0}^{1}\frac{\arctan x}{x}dx=\frac{1}{2}\int_{0}^{\pi /2}\frac{x}{%
\sin x}dx \\
&=&-2\int_{0}^{\pi /4}\ln \left( 2\sin x\right) dx=\frac{\pi ^{2}}{16}-\frac{%
\pi }{4}\ln 2+\int_{0}^{\pi /4}\frac{x^{2}}{\sin ^{2}x}dx.
\end{eqnarray*}%
We next prove three accurate estimations for $G$.

\begin{application}
We have%
\begin{eqnarray}
\frac{\sqrt{6}\pi }{2\sqrt{16\sqrt{3}-\pi ^{2}}} &<&G<\allowbreak \frac{3}{32%
}\pi ^{2},  \label{A5} \\
\frac{\pi }{2}\left( \ln 2-\ln \pi +\frac{\pi ^{2}}{288}+1\right) &<&G<\frac{%
\pi }{4}\left( 2-\ln 2-\frac{\pi ^{2}}{72}\right) ,  \label{A6} \\
\frac{\pi ^{2}}{16}-\frac{\pi }{4}\ln 2+\frac{8}{5}\left( 172-99\sqrt{3}%
\right) &<&G<\allowbreak \frac{\pi ^{2}}{320}\left( 37+6\sqrt{3}\right) -%
\frac{\pi }{4}\ln 2.  \label{A7}
\end{eqnarray}
\end{application}

\begin{proof}
(i) (\ref{Mrre2}) implies that for $\left( 0,\pi /2\right) $ 
\begin{equation*}
\frac{1}{\cos ^{2}\tfrac{x}{\sqrt{6}}}<\frac{x}{\sin x}<\frac{1}{\gamma _{1/%
\sqrt{6}}\left( \tfrac{\pi }{2}\right) \cos ^{2}\tfrac{x}{\sqrt{6}}},
\end{equation*}%
where $\gamma _{1/\sqrt{6}}\left( \pi /2\right) =2\pi ^{-1}\cos ^{-2}\left( 
\sqrt{6}\pi /12\right) $. Integrating both sides over $\left[ 0,\pi /2\right]
$ yields%
\begin{equation*}
\sqrt{6}\tan \frac{\pi }{2\sqrt{6}}<\int_{0}^{\pi /2}\frac{x}{\sin x}dx<%
\frac{\sqrt{6}}{4}\pi \sin \frac{\pi }{\sqrt{6}}.
\end{equation*}%
By Corollary \ref{Corollary M} it is seen that for $x\in \left( 0,\pi
/2\right) $ 
\begin{equation*}
\sin x>x\left( \cos \frac{\sqrt{6}x}{3}\right) ^{1/2}
\end{equation*}%
holds, and so 
\begin{equation*}
\tan ^{2}\tfrac{\pi }{2\sqrt{6}}=\tfrac{1}{1-\sin ^{2}\tfrac{\pi }{2\sqrt{6}}%
}-1>\tfrac{1}{1-\left( \tfrac{\pi }{2\sqrt{6}}\right) ^{2}\cos \left( \tfrac{%
\sqrt{6}}{3}\tfrac{\pi }{2\sqrt{6}}\right) }-1=\allowbreak \tfrac{\pi ^{2}}{%
16\sqrt{3}-\pi ^{2}}.
\end{equation*}%
On the other hand, application of the second inequality in (\ref{Mrre2})
gives 
\begin{equation*}
\sin \frac{\pi }{\sqrt{6}}<\frac{\pi }{\sqrt{6}}\cos ^{2}\tfrac{\tfrac{\pi }{%
\sqrt{6}}}{\sqrt{6}}=\allowbreak \frac{\sqrt{6}}{8}\pi .
\end{equation*}%
Hence, 
\begin{equation*}
\tfrac{\sqrt{6}\pi }{\sqrt{16\sqrt{3}-\pi ^{2}}}<\sqrt{6}\tan \tfrac{\pi }{2%
\sqrt{6}}<\int_{0}^{\pi /2}\frac{x}{\sin x}dx<\frac{\sqrt{6}}{4}\pi \sin 
\tfrac{\pi }{\sqrt{6}}<\frac{3}{16}\pi ^{2},
\end{equation*}%
which, from the second integral representation for $G$, implies (\ref{A5}).

(ii) By (\ref{A42}) it is derived that 
\begin{equation*}
\frac{\pi }{8}\left( \ln 2+\frac{1}{72}\pi ^{2}-2\right) <\int_{0}^{\pi
/4}\ln \left( 2\sin x\right) dx<\frac{\pi }{4}\left( \ln \pi -\ln 2-1-\frac{1%
}{288}\pi ^{2}\right) ,
\end{equation*}%
it follows from the third integral representation for $G$ that (\ref{A5})
holds.

(iii) Lastly, we use the fourth integral representation for $G$ to prove (%
\ref{A6}). Employing Theorem \ref{Theorem Mrre}, we have 
\begin{equation*}
\gamma _{1/3}\left( \tfrac{\pi }{4}\right) \cos ^{3}\frac{x}{3}<\frac{\sin x%
}{x}<\cos ^{3}\frac{x}{3},
\end{equation*}%
where $\gamma _{1/3}\left( \pi /4\right) =16\left( 3\sqrt{3}-5\right) /\pi $%
. It is obtained that 
\begin{equation*}
\cos ^{-6}\frac{x}{3}<\frac{x^{2}}{\sin ^{2}x}<\allowbreak \frac{\pi ^{2}}{%
512}\left( 15\sqrt{3}+26\right) \cos ^{-6}\frac{x}{3},
\end{equation*}%
and integrating both sides over $\left[ 0,\pi /4\right] $ leads to 
\begin{equation*}
\int_{0}^{\pi /4}\cos ^{-6}\frac{x}{3}dx<\int_{0}^{\pi /4}\frac{x^{2}}{\sin
^{2}x}dx<\allowbreak \frac{\pi ^{2}}{512}\left( 15\sqrt{3}+26\right)
\int_{0}^{\pi /4}\cos ^{-6}\frac{x}{3}dx.
\end{equation*}%
Integral computation reveals that 
\begin{equation*}
\int_{0}^{\pi /4}\left( \cos \tfrac{x}{3}\right) ^{-6}dx=\frac{8}{5}\left(
172-99\sqrt{3}\right) ,\allowbreak
\end{equation*}%
and therefore 
\begin{equation*}
\frac{8}{5}\left( 172-99\sqrt{3}\right) <\int_{0}^{\pi /4}\frac{x^{2}}{\sin
^{2}x}dx<\allowbreak \frac{\pi ^{2}}{320}\allowbreak \left( 6\sqrt{3}%
+17\right) .
\end{equation*}%
Application of the fourth integral representation for $G$ the desired
inequality (\ref{A6}) follows.
\end{proof}

$\allowbreak $

\end{document}